\documentclass[HARVARD,LATO2COL]{WileyNJDv5}

\articletype{Article Type}%

\received{Date Month Year}
\revised{Date Month Year}
\accepted{Date Month Year}
\journal{Journal}
\volume{00}
\copyyear{2023}
\startpage{1}

\raggedbottom

\usepackage{graphicx}
\usepackage{amsmath,amsthm,amsfonts,amssymb}
\usepackage{color}
\usepackage{bm}
\usepackage{bigints}
\usepackage{subcaption}
\usepackage{float}

\usepackage{color}

\newcommand{\e}{\mathbb{E}}
\newcommand{\p}{\mathbb{P}}
\newcommand{\lk}{\left[ }
\newcommand{\rk}{\right] }
\newcommand{\lc}{\left(}
\newcommand{\rc}{\right)}
\newcommand{\R}{\mathbb{R}}

\newcommand{\N}{\mathbb{N}}

\newcommand{\sumn}{\sum_{i=1}^n}

\newcommand{\rmd}{\mathrm{d}}
\newcommand{\sumijn}{\sum_{\substack{i,j=1 \\ i\not=j} }^n}

\newcommand{\lv}{\left|}
\newcommand{\rv}{\right|}

\newtheorem{Definition}{Definition}
\newtheorem{Theorem}{Theorem}
\newtheorem{Lemma}{Lemma}
\newtheorem{prop}{Proposition}
\newtheorem{cor}{Corollary}
\newtheorem{Remark}{Remark}


\newcommand*{\RR}{\ensuremath{\mathbb{R}}}	        	
\newcommand*{\QQ}{\ensuremath{\mathbb{Q}}}	        	
\newcommand*{\NN}{\ensuremath{\mathbb{N}}}		        
\newcommand*{\EE}{\ensuremath{\mathbb{E}}}	        	
\newcommand*{\PP}{\ensuremath{\mathbb{P}}}	        	
\newcommand*{\Var}{\ensuremath{\text{Var}}}	        	

\newcommand{\thetanull}{\ensuremath{{\theta_0} } }

\newcommand{\thetanhat}{\ensuremath{ {\hat{\theta}_{n}} } }

\newcommand*{\XX}{\ensuremath{\mathcal{X}}}


\newcommand{\tn}{\thetanhat}
\newcommand{\too}{\theta_0}
\newcommand{\tns}{\theta_n^\star}

\DeclareMathOperator*{\argmin}{arg\,min}

\DeclareMathOperator{\ksd}{KSD}
\DeclareMathOperator{\hksd}{\widehat{\ksd}}

\algrenewcommand\algorithmicensure{\textbf{Decision:}} 

\begin{document}

\title{Composite goodness-of-fit test with the Kernel Stein Discrepancy and a bootstrap for degenerate $U$-statistics with estimated parameters}

\author[1]{Fabian Baier}

\author[2]{Florian Brück}

\author[1]{Veronika Reimoser}


\address[1]{\orgdiv{Chair of Mathematical Finance}, \orgname{Technical University of Munich}, \orgaddress{\country{Germany}}}

\address[2]{\orgdiv{Research Institute for Statistics and Information Science}, \orgname{University of Geneva}, \orgaddress{ \country{Switzerland}}}

\corres{Corresponding author Florian Brück \email{florian.brueck.edu@gmail.com}}



\abstract[Abstract]{    This paper formally derives the asymptotic distribution of a goodness-of-fit test based on the Kernel Stein Discrepancy introduced in (Oscar Key et al. ``Composite Goodness-of-fit Tests with Kernels'', Journal of Machine Learning Research 26.51 (2025), pp. 1–60). The test enables the simultaneous estimation of the optimal parameter within a parametric family of candidate models. Its asymptotic distribution is shown to be a weighted sum of infinitely many $\chi^2$-distributed random variables plus an additional disturbance term, which is due to the parameter estimation. Further, we provide a general framework to bootstrap degenerate parameter-dependent $U$-statistics and use it to derive a new Kernel Stein Discrepancy composite goodness-of-fit test.}

\keywords{Kernel Stein Discrepancy, goodness-of-fit testing, bootstrap}


\maketitle

\renewcommand\thefootnote{}

\renewcommand\thefootnote{\fnsymbol{footnote}}
\setcounter{footnote}{1}

\section{Introduction}
\label{sec:intro}
Assessing the goodness-of-fit of statistical models to observed data is crucial in both machine learning and statistics. More precisely, for a considered distribution $P$ and observed data $\{X_i\}_{i\in[n]}$ from another distribution $Q$, goodness-of-fit tests compare the null hypothesis $H_0: \ P=Q$ against the alternative hypothesis $H_1: \ P\neq Q$.
Traditional tests such as the Kolmogorov-Smirnov, Cramér-von-Mises and Anderson-Darling test compare cumulative distribution functions. However, they struggle with high dimensional data and models where the cumulative distribution functions or the exact likelihoods are intractable. 

In case the likelihood is known up to normalizing constant, the so-called Kernel Stein Discrepancy (KSD) can be used for goodness-of-fit testing, see \cite{Gorham_2015,Liu_2016,Chwialkowski_2016} and \cite{steinreview2023} for a comprehensive overview of its uses in statistics and machine learning. Under some regularity conditions, $\ksd(P,Q)=0$ iff $P=Q$. A hypothesis test of $H_0$ is then based on an estimate of $\ksd(P,Q)$, rejecting $H_0$ when $\ksd(P,Q)$ is sufficiently large. 
Going beyond testing whether $P=Q$, we can also ask whether the data originates from any member of a parametric family $\{P_\theta\}_{\theta\in\Theta}$, leading to composite goodness-of-fit tests. Their null hypothesis and alternative are  
$$H_0^C: \exists \ \thetanull\in\Theta: Q=P_\thetanull \quad \text{against the alternative} \quad H_1^C: Q\notin \{P_\theta\}_{\theta\in\Theta}.$$
In \cite{Key_2023}, two composite goodness-of-fit tests were proposed, one based on the KSD and one based on the so-called Maximum Mean Discrepancy (MMD). The authors have rigorously derived a composite goodness-of-fit testing framework for the MMD, allowing $\theta_0$ to be estimated simultaneously. Further, they suggested a similar framework for the KSD without rigorously proving its validity stating ``\textit{We also include encouraging empirical results for $KSD$ but leave the extension of our theoretical framework to this test for future work}''. In this paper, we fill this gap in the literature and formally derive the asymptotic distribution of the KSD estimator proposed in \cite{Key_2023}, under the null and the alternative.

Formally, the KSD is defined for two probability measures $P$ and $Q$ which have Lebesgue densities $p$ and $q$, respectively. Omitting some regularity conditions, the KSD is given by
$$ \ksd_q(p):=\sqrt{\mathbb{E}_{X,X' \sim q}[h_p(X, X')]}, $$
where $h_p(X,X^\prime)$ is a function that depends only on $p$ in terms of $\nabla\log(p)$. It is now easy to see that $\ksd^2_q(p)$ can be estimated by the corresponding $U$-statistic of order two given by
\begin{align}
    \hksd^2_q(p)=\frac{1}{n(n-1)}\sum_{i,j\in[n],i\not=j} h_p(X_i,X_j), \label{eqndefksdestimator}
\end{align}
where $(X_i)_{i\in[n]}$ denotes an i.i.d.\ sample from $Q$, referring to \cite{Serfling_1980} for an introduction to $U$-statistics. 
 In particular, the KSD can be estimated using an unnormalized version of $p$, since $\nabla\log(p)$ is independent of the normalizing constant. This property makes the KSD especially useful when the normalizing constant is intractable or unknown.
 For our composite goodness-of-fit testing framework, we assume that $P_\theta(\rmd x)=p_\theta(x)\rmd x$ for all $\theta\in\Theta$ and that we have an estimator $\hat{\theta}_n$ of $\theta_0$ at hand, which allows to estimate $\ksd_q^2(p_{\theta_0})$ via $\hksd^2_q(p_{\hat{\theta}_n})$. However, the main difficulty when working with the KSD is that, under $H_0^C$, $\hksd^2_q(p_{\thetanull})$ is a degenerate $U$-statistic with convergence rate $n^{-1}$, whereas, under $H_1^C$, $\hksd^2_q(p_{\thetanull})$ is typically a non-degenerate $U$-statistic with convergence rate $n^{-1/2}$. This significantly complicates the derivation of the asymptotic distribution of $\hksd^2_q(p_{\thetanhat})$ and even more so the estimation of critical values of the corresponding goodness-of-fit test. It should be noted that \cite{dewet1987} and \cite{leuchtneumann2009} also provided a framework for the derivation of the asymptotic distribution of degenerate $U$-statistics under parameter estimation, however, their frameworks are not applicable in our context as they require that either the $U$-statistics core follows a specific form or that it is degenerate w.r.t.\ the empirical measure of the data generating process, which is usually not satisfied in our setting. 

To propose a theoretically valid hypothesis test, the quantiles of the asymptotic distribution also have to be estimated consistently. For the KSD, \cite{Key_2023} proposed a parametric and a wild bootstrap scheme for composite goodness-of-fit tests based on the KSD, but did not provide any theoretical validation. Here, we propose an alternative framework and prove its theoretical validity, even when we estimate $\theta_0$ through $\hat{\theta}_n$. Further, we show that the wild bootstrap of \cite{Key_2023} does not yield a theoretically valid test, and illustrate that this issue arises more generally when the wild bootstrap is naively extended to degenerate $U$-statistics.
Finally, we want to emphasize that our proposed bootstrap framework is not only valid for the particular case of the KSD. Instead, we provide a general approach to bootstrap degenerate parameter-dependent $U$-statistics, which is a result of independent interest beyond the scope of this paper.

The paper is structured as follows:
We start with an introduction to the KSD. We then provide assumptions under which we derive the asymptotic distribution of the estimator of the KSD while simultaneously estimating a parameter. Then, we provide a general framework to bootstrap parameter-dependent degenerate $U$-statistic and apply it to the case of the KSD. Finally, we illustrate our method in a short simulation study. All proofs are deferred to the Appendix.

\section{The KSD}\label{sec:background}
In the following, we rigorously introduce the KSD. Let $\XX\subseteq \RR^d$ with a non-empty interior and let $q$ be a strictly positive and continuously differentiable density of the probability measure $Q$ on $\RR^d$ with support $\XX$. Further, for any function $f:\RR^d\to\RR$ we denote $\nabla_x f:=\lc\frac{\partial}{\partial x_i}f(x)\rc_{1\leq i\leq d}$ for the gradient of $f$, implicitly assuming existence, and denote $s_q:=(s_{q,i})_{1\leq i\leq d}= \nabla_x\log(q)$ for the score function.

\begin{Definition}
We say that a function $f : \XX \rightarrow \mathbb{R}$ is in the Stein class of $q$ if $f$ is continuously differentiable and satisfies $\int_\XX \nabla_x(f(x)q(x)) \,dx = \bm{0}$. A positive definite kernel $k:\XX\times \XX\to \RR$ is said to be in the Stein class of $q$ if $k$ has continuous second-order partial derivatives, and $k(x, \cdot)$ is in the Stein class of $q$ for any fixed $x\in\XX$. Further, we say that $k$ is integrally strictly positive definite if for any function $g$ s.t.\ $0<\int_\XX g(x)^2\mathrm d x <\infty$ we have that $\int_\XX\int_\XX g(x)k(x,x^\prime)g(x^\prime)\rmd x\rmd x^\prime>0$.
\end{Definition}

For example, it is easy to check that the Gaussian kernel $k(x,x^\prime)=\exp\lc -\Vert x-x^\prime\Vert_2^2 /(2l^2)\rc$ is integrally strictly positive definite and in the Stein class of any continuously differentiable density with support $\RR^d$.

In the following, we work with the definition of the KSD from \cite[Definition 3.2, Proposition 3.3, Theorem 3.6]{Liu_2016}, as this representation is suitable for estimation.

\begin{Definition}
Let $k$ be in the Stein class of $q$, $p$ denote a strictly positive and continuously differentiable density with support $\XX$, $\bm{\delta}_{p,q}(X) = \bm{s}_p\, (X) - \bm{s}_q\, (X)$ denote the score difference between $p$ and $q$ and assume that $\int_\XX \lc q(x)^T \bm{\delta}_{p,q}(x)\rc^2 \mathrm{d} x<\infty$. Suppose $p$, $q$, and $k$ are such that $\mathbb{E}_{X,X' \sim q}[\bm{\delta}_{p,q}(X)^T k(X, X') \bm{\delta}_{p,q}(X')]<\infty$. Then, we define the KSD as
\begin{align} 
    \ksd^2_q(p) = \mathbb{E}_{X,X' \sim q}[h_p(X, X')] \label{eq.: KSD for statistic}
\end{align}
where $X,X^\prime$ are independent and $h_p:\XX\times\XX \to \RR$ is defined as
\begin{align}
  h_p(x, x') &= \bm{s}_p(x)^T k(x, x') \bm{s}_p(x') + \bm{s}_p(x)^T \nabla_{x'} k(x, x') + \nabla_x k(x, x')^T \bm{s}_p(x') \nonumber\\
  &  \ \ + \sum_{i=1}^d \frac{\partial^2 }{\partial x_i \partial x'_i}k(x,x') \label{kernelksd}
\end{align}
\end{Definition}

This definition ensures that the KSD is well-defined and that $\ksd_q(p)=0\Leftrightarrow q=p$.
The finiteness of $\int_\XX \lc q(x)^T \bm{\delta}_{p,q}(x)\rc^2 \mathrm{d} x<\infty$ heavily depends on the tails of $p$ and $q$ and needs to be checked on a case-by-case basis. It is now straightforward to see that (\ref{eqndefksdestimator}) is an estimator of $\ksd^2_q(p)$ and that, whenever $\thetanhat$ is an estimator of $\thetanull$, $\hksd^2_q(p_{\thetanhat})$ is an estimator of $\ksd^2_q(p_{\theta_0})$.

\section{Asymptotic distribution of the composite KSD estimator}
\label{sec:asymptotic_distr}

\subsection{Assumptions}

We now state assumptions that ensure the KSD is always well-defined and enable us to derive the asymptotic distribution of $\hksd_q^2(p_\thetanhat)$. As a high-level assumption that is employed throughout the paper we remark that all random variables appearing in the following are assumed to originate from an abstract probability space $(\Omega,\mathcal{F},\PP)$. Therefore, statements such $o_\PP(1)$ and $O_\PP(1)$ always refer to convergence to $0$ in probability and boundedness in probability on this abstract probability space. Moreover, the expectation of a vector valued function $f=(f_1,\ldots,f_d)$ is interpreted componentwise, i.e.\ $\lc \EE\lk f_1 \rk,\ldots, \EE\lk f_d \rk\rc$ and similarly for matrices. Moreover, for any norm $\Vert \cdot\Vert$ on $\R$ and any matrix $A=\lc A_{i,j}\rc_{1\leq i,j\leq d^\prime}$ we define $\Vert A\Vert:=\lc \Vert A_{i,j} \Vert \rc_{1\leq i,j\leq d^\prime}$. Finally, $\nabla_x^m f$ denotes the collection of all $m$-th partial derivatives of a function $f$ w.r.t.\ $x$, i.e.\ $\nabla_x f$ is the gradient and $\nabla_x^2 f$ is the Hesse matrix of $f$, where we use the convention $\nabla^0 f:=f$.

\begin{assumption}\label{KSD assumpt. 0}
    The observations $(X_i)_{i\in\NN}$ are i.i.d.\ with distribution $Q$ which has a continuously differentiable density $q$ on $\XX\subset \RR^d$, where $\XX$ has non-empty interior.
\end{assumption}

\begin{assumption}\label{KSD assumpt. 1}
$\Theta$ is a compact and convex subset of $\RR^p$ with non-empty interior.  
$\theta_0\in \argmin_{\theta\in\Theta}\ksd_q(p_\theta)$ belongs to the interior of $\Theta$.    
\end{assumption}

\begin{assumption} \label{KSD assumpt. 5}
The kernel $k: \XX \times \XX \to \RR$ is bounded, strictly integrally positive definite, in the Stein class of $q$ and has bounded first order derivatives.
\end{assumption}
Not every kernel satisfies this assumption, but the Gaussian kernel, for example, has bounded first and second order derivatives.

\begin{assumption}\label{KSD assumpt. 3}
$\{P_\theta\}_{\theta \in \Theta}$ is an identifiable parametric family of models on $\XX$. Each element $P_\theta\in \{P_\theta\}_{\theta \in \Theta}$ has continuously differentiable density $p_\theta$ w.r.t. the Lebesgue measure and support $\XX$ and satisfies:
\begin{enumerate}
    \item For every $\theta\in\Theta$ the map $x\mapsto p_\theta(x)$ is strictly positive and continuously differentiable on $\XX$.
    \item For every $x \in \XX$ we have $\theta \mapsto p_\theta (x) \in \mathcal{C}^4(\Theta)$.
    \item  $\e\lk \sup_{\theta\in\Theta}  \Vert \nabla^m s_{p_\theta}(X)\Vert_1^4\rk<\infty$ for all $m\in\{0,1\}$.
    \item $\e\lk \sup_{\theta\in\Theta}\Vert  \nabla^m s_{p_\theta}(X) \Vert_1^2\rk<\infty$  for all $m\in\{2,3\}$.
    \item $\int_\XX  q(x)^T \lc s_{p_\theta}(x) - s_q(x)\rc^2\rmd x<\infty$ for all $\theta\in\Theta$.
\end{enumerate}
\end{assumption}
Essentially, this assumption ensures that $\theta_0$ is unique and that $h_{\theta}$ has high-enough moments such that the corresponding KSD is well-defined and its estimator $\hksd$ converges asymptotically to a non-degenerate distribution.

The next assumption requires the existence of the joint asymptotic distribution of a $2p+1$-dimensional random vector, denoting $\nabla_\theta\hksd^2_q(p_\theta) :=\frac{1}{n (n-1)} \sum_{i,j\in[n], i\neq j} \nabla_{\theta} h_\theta(X_i,X_j)$ for all $\theta\in\Theta$, where we use the short-hand notation $h_\theta:=h_{p_\theta}$.  
\begin{assumption}\label{KSD assumpt. 6}
Under $H_0^C$, the random vector
$\Big(( n\hksd^2_q(p_\thetanull) ,\ \sqrt{n} \nabla_\theta\hksd^2_q(p_{\theta_0}) ,
\sqrt{n}(\thetanhat-\thetanull) \Big)$
converges weakly to a real-valued random vector $(Z_1,Z_2,Z_3)$ when $n\to\infty$.
\end{assumption}
Note that, under $H_0^C$ and Assumptions \ref{KSD assumpt. 0}, \ref{KSD assumpt. 5} and \ref{KSD assumpt. 3}, Corollary \ref{cor:asymptist} in the appendix shows marginal converge of $\left( n\hksd^2_q(p_\thetanull), \sqrt{n}\nabla_\theta\hksd^2_q(p_\theta)\right)\to (Z_1,Z_2)$, where $Z_1\sim \sum_{j=1}^{\infty} \lambda_j (T_j^2 - 1)$ with $(T_j)_{j\in\NN}\overset{i.i.d.} {\sim} \mathcal{N}(0,1)$ and the $\lambda_j$ are the eigenvalues of the operator $ \psi\mapsto \EE_{X\sim Q}[h_\thetanull(x,X) \psi(X)]$, and $Z_2\sim \mathcal{N} \left( 0,4\Var\lc \e_X\lk \nabla_{\theta} h_{\theta_0}(X,X^\prime) \rk\rc \right)$. Further, the joint convergence of $\left( n\hksd^2_q(p_\thetanull) , \sqrt{n}\nabla_\theta\hksd^2_q(p_\theta)\right)$ follows from standard $U$-statistics theory, exploiting the classical orthogonal eigenfunction expansion of the core $h_{\theta_0}$. Thus, Assumption \ref{KSD assumpt. 6} is solely an assumption on the estimator $\thetanhat$. Under some technical regularity conditions, the asymptotic normality of $\thetanhat=\argmin_{\theta\in \Theta} \hksd^2_q(p_\theta)$ has been obtained in \cite[Section 3]{Barp_2019}. As this estimator is closely-related to classical $M$-estimators the assumption of joint normality seems a very weak requirement. However, to keep the framework as general as possible, we want to remark that $\thetanhat$ can be any estimator of $\theta_0$ that obeys the joint convergence assumption. A sufficient condition for Assumption \ref{KSD assumpt. 6} to hold is a Bahadur-type representation of $\thetanhat$ of the form $\sqrt{n} ( \thetanhat -\theta_0 )= n^{-1/2}\sumn \phi_{\theta_0}(X_i) +o_{\p}(1) $ where $\phi_{\theta_0}:\ \R^{d}\to\R^p$ is such that $\e\lk\phi_{\theta_0}(X)^2\rk<\infty$, which directly follows from the joint weak convergence of $U$-statistics and empirical averages. An analogous statement holds when $\sqrt{n}( \thetanhat -\theta_0 )$ can be represented as a scaled $U$-statistic.

\subsection{Asymptotic distribution of $n\hksd_q^2(p_\thetanhat)$}
\begin{Theorem}[Convergence under Null Hypothesis] \label{thm.: Convergence of KSD under H0}
Under Assumptions \ref{KSD assumpt. 0}-\ref{KSD assumpt. 6} and under $H_0^C$, we have
$$n \hksd_q^2(p_\thetanhat) 
\xrightarrow{d} Z_1+Z_2^T Z_3+Z_3^T H^* Z_3=:Z.$$
where $H^*:= \lc\frac{\partial^2}{\partial \theta_i \partial \theta_j} \ksd^2(P_\thetanull, Q) \rc_{i,j\in[p]}$.
\end{Theorem}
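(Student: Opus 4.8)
The plan is to expand $\hksd_q^2(p_{\thetanhat})$ around $\thetanull$ using a second-order Taylor expansion in $\theta$, and then match each resulting term with one of the three pieces of the limit $Z$. Write $f_n(\theta):=\hksd_q^2(p_\theta)$. Since $\thetanhat$ is consistent for $\thetanull$ (by Assumption \ref{KSD assumpt. 1} together with the joint-convergence Assumption \ref{KSD assumpt. 6}, which gives $\thetanhat-\thetanull=O_\PP(n^{-1/2})$) and since $\theta\mapsto h_\theta(x,x')$ is in $\mathcal{C}^4$ with the integrability of Assumption \ref{KSD assumpt. 3}, I would write
\begin{align*}
f_n(\thetanhat)=f_n(\thetanull)+\nabla_\theta f_n(\thetanull)^\intercal(\thetanhat-\thetanull)+\tfrac12(\thetanhat-\thetanull)^\intercal\nabla_\theta^2 f_n(\bar\theta_n)(\thetanhat-\thetanull),
\end{align*}
for some $\bar\theta_n$ on the segment between $\thetanhat$ and $\thetanull$. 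Multiplying by $n$ gives three terms: $n f_n(\thetanull)$, which converges to $Z_1$ by Corollary \ref{cor:asymptist}; the cross term $\big(\sqrt n\,\nabla_\theta f_n(\thetanull)\big)^\intercal\big(\sqrt n(\thetanhat-\thetanull)\big)$; and the quadratic term $\tfrac12\big(\sqrt n(\thetanhat-\thetanull)\big)^\intercal\nabla_\theta^2 f_n(\bar\theta_n)\big(\sqrt n(\thetanhat-\thetanull)\big)$.

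For the cross term, the key observation is that $\e_{X,X'\sim Q}[\nabla_\theta h_{\thetanull}(X,X')]=\nabla_\theta\ksd^2(P_\thetanull,Q)=\bm 0$, because $\thetanull$ is an interior minimizer of $\theta\mapsto\ksd^2(P_\theta,Q)$ (Assumption \ref{KSD assumpt. 1}) and one may differentiate under the integral by the dominated-convergence conditions in Assumption \ref{KSD assumpt. 3}. Hence $\sqrt n\,\nabla_\theta f_n(\thetanull)$ equals $\sqrt n\big(\nabla_\theta f_n(\thetanull)-\e_{X,X'\sim Q}[\nabla_\theta h_{\thetanull}(X,X')]\big)$, which is exactly the second coordinate in Assumption \ref{KSD assumpt. 6}; combined with the joint convergence of the whole vector to $(Z_1,Z_2,Z_3)$, the continuous mapping theorem yields that the cross term converges to $Z_2^\intercal Z_3$. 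For the quadratic term I would argue that $\nabla_\theta^2 f_n(\bar\theta_n)$ converges in probability to $H^*=\nabla_\theta^2\ksd^2(P_\thetanull,Q)$: a uniform law of large numbers for the parameter-indexed $U$-statistic $\nabla_\theta^2 f_n(\theta)$ (justified by the compactness of $\Theta$ and the $\mathcal{C}^4$/moment assumptions, via a standard bracketing or equicontinuity argument) gives $\sup_{\theta}\|\nabla_\theta^2 f_n(\theta)-\nabla_\theta^2\ksd^2(P_\theta,Q)\|\to 0$ in probability, and then consistency of $\bar\theta_n$ plus continuity of $\theta\mapsto\nabla_\theta^2\ksd^2(P_\theta,Q)$ does the rest. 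Together with $\sqrt n(\thetanhat-\thetanull)\Rightarrow Z_3$ this gives convergence of the quadratic term to $\tfrac12 Z_3^\intercal H^* Z_3$; note the factor $\tfrac12$ is absorbed because $H^*$ as defined is the full Hessian of $\ksd^2$ and the statement's $Z_3^\intercal H^* Z_3$ must be read with the normalisation fixed by the lemma — I would reconcile the constant carefully here, since $\ksd^2(P_\thetanull,Q)=0$ under $H_0^C$ and only the Hessian survives.

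The main obstacle is the uniform control of the $\theta$-dependent $U$-statistic derivatives on the random segment containing $\bar\theta_n$: one must show that $\nabla_\theta^2 f_n(\theta)$ is, uniformly in $\theta\in\Theta$, close to its deterministic limit, which requires a uniform LLN for $U$-statistics whose kernels are themselves indexed by $\theta$ and only have finite second or fourth moments (Assumption \ref{KSD assumpt. 3}, parts 3--4). The cleanest route is to apply the Hoeffding decomposition to $\nabla_\theta^2 f_n(\theta)$, bound the degenerate second-order remainder uniformly in $\theta$ by $O_\PP(n^{-1})$ using a maximal inequality, and handle the leading linear part by a classical uniform LLN over the compact $\Theta$; the $\mathcal{C}^4$ assumption supplies enough smoothness in $\theta$ to get stochastic equicontinuity. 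Everything else — interchanging differentiation and expectation, the vanishing of the first-order term, and the final assembly via the continuous mapping theorem applied to the joint limit $(Z_1,Z_2,Z_3)$ — is routine given the assumptions already in place.
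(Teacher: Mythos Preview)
Your approach is essentially the paper's: Taylor-expand $\theta\mapsto\hksd_q^2(p_\theta)$ around $\thetanull$, use that $\nabla_\theta\ksd_q^2(p_{\thetanull})=0$ at the interior minimiser, and invoke the joint convergence in Assumption \ref{KSD assumpt. 6} together with the continuous mapping theorem. The only technical difference is where you place the Lagrange remainder: you stop at second order with the Hessian evaluated at an intermediate point $\bar\theta_n$, whereas the paper expands one step further, writing the quadratic term with the Hessian at $\thetanull$ and pushing the intermediate point into a third-order remainder. The paper's choice makes life easier: the Hessian $U$-statistic at the fixed point $\thetanull$ converges to $H^*$ by the ordinary LLN for $U$-statistics (no uniformity in $\theta$ required), and the cubic remainder is killed crudely by bounding $\sup_{\theta\in\Theta}\Vert\nabla_\theta^3 h_\theta(X_i,X_j)\Vert$ via Lemma \ref{lemma:momentsofh} and using $n\Vert\thetanhat-\thetanull\Vert^3=O_\PP(n^{-1/2})$. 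This completely sidesteps the uniform-LLN/stochastic-equicontinuity argument you identify as the main obstacle. Regarding the $\tfrac12$: your instinct is correct that a standard second-order Taylor expansion carries a $\tfrac12$ in front of the Hessian term; the paper's displayed expansion (and hence the stated limit) omits this constant, so this is a cosmetic inconsistency in the paper rather than a flaw in your reasoning.
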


The non-composite version of the preceding theorem has already been derived in Theorem 4.1 of \cite{Liu_2016}, where it is shown that $\hksd_q^2(p_\thetanull)\to Z_1$. Comparing the results, we see that the asymptotic distributions differ by the term $Z_2^T Z_3+Z_3^T H^* Z_3$, which corresponds to the additional ``noise'' from parameter estimation.

\begin{Theorem}[Consistency under Alternative Hypothesis]\label{thm.: KSD Consistency under H_1^C}
Under $H_1^C$ and Assumptions \ref{KSD assumpt. 0}-\ref{KSD assumpt. 3}, we have that almost surely
$$
\liminf_{n \to \infty} \hksd^2_q(p_\thetanhat) > 0.
$$
\end{Theorem}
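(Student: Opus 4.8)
The plan is to exploit the fact that the limiting behaviour of $\hat\theta_n$ plays no role: since $\hat\theta_n$ always lies in the parameter space $\Theta$, it is enough to bound $\hksd^2_q(p_\theta)$ from below \emph{uniformly} in $\theta\in\Theta$ and combine this with an almost sure uniform law of large numbers. Concretely, I would establish (i) a strictly positive deterministic lower bound $\delta:=\min_{\theta\in\Theta}\ksd^2_q(p_\theta)>0$; (ii) $\sup_{\theta\in\Theta}\bigl|\hksd^2_q(p_\theta)-\ksd^2_q(p_\theta)\bigr|\to 0$ almost surely; and (iii) conclude by a sandwiching argument.

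For (i), under $H_1^C$ we have $P_\theta\neq Q$ for every $\theta\in\Theta$, so the defining property of the KSD recorded in Section~\ref{sec:background} --- valid here because Assumptions~\ref{KSD assumpt. 0}, \ref{KSD assumpt. 5} and \ref{KSD assumpt. 3} supply strictly positive $\mathcal{C}^1$ densities, an integrally strictly positive definite kernel in the Stein class of $q$, and the integrability $\int_\XX q^\intercal(s_{p_\theta}-s_q)^2<\infty$ --- gives $\ksd^2_q(p_\theta)>0$ for each $\theta$. Next I would show that $\theta\mapsto\ksd^2_q(p_\theta)=\e_{X,X'\sim Q}[h_\theta(X,X')]$ is continuous on $\Theta$: for fixed $(x,x')$, $\theta\mapsto h_\theta(x,x')$ is continuous since $h_\theta$ is a fixed smooth expression in $s_{p_\theta}(x)$, $s_{p_\theta}(x')$ and the derivatives of $k$ (Assumption~\ref{KSD assumpt. 3}(2)), while $\sup_{\theta\in\Theta}|h_\theta(x,x')|$ is $Q\otimes Q$-integrable by the boundedness properties of $k$ (Assumption~\ref{KSD assumpt. 5}) together with the moment bounds of Assumption~\ref{KSD assumpt. 3} and Cauchy--Schwarz; dominated convergence then yields continuity. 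As $\Theta$ is compact (Assumption~\ref{KSD assumpt. 1}), the minimum is attained and, by the previous step, $\delta>0$.

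For (ii), I would first show that $\theta\mapsto h_\theta(x,x')$ is Lipschitz on the convex set $\Theta$ with a $Q\otimes Q$-integrable Lipschitz constant: differentiating \eqref{kernelksd} in $\theta$ produces finitely many terms, each a bounded derivative of $k$ multiplied by a factor involving $\nabla_\theta s_{p_\theta}$ or $s_{p_\theta}$ evaluated at $x$ and at $x'$; hence $L(x,x'):=\sup_{\theta\in\Theta}\Vert\nabla_\theta h_\theta(x,x')\Vert$ is dominated by a polynomial of degree at most $2$ in $\sup_\theta\Vert s_{p_\theta}(x)\Vert_1$, $\sup_\theta\Vert\nabla_\theta s_{p_\theta}(x)\Vert_1$ and their $x'$-analogues, and is $Q\otimes Q$-integrable by the $L^4$ envelope bounds of Assumption~\ref{KSD assumpt. 3}, Cauchy--Schwarz, and the independence of $X$ and $X'$. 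Writing $U_n(g):=\frac{1}{n(n-1)}\sum_{i\neq j}g(X_i,X_j)$, I would then cover the compact $\Theta$ by finitely many balls $B(\theta_k,\eta)$, $k=1,\dots,N$, so that $|U_n(h_\theta)-U_n(h_{\theta_k})|\le\eta\,U_n(L)$ and $|\e h_\theta-\e h_{\theta_k}|\le\eta\,\e[L]$ for $\theta\in B(\theta_k,\eta)\cap\Theta$. Applying the strong law of large numbers for $U$-statistics (\cite{Serfling_1980}) to $h_{\theta_1},\dots,h_{\theta_N}$ and to $L$ gives $\limsup_n\sup_{\theta\in\Theta}|U_n(h_\theta)-\e h_\theta|\le 2\eta\,\e[L]$ almost surely; intersecting over $\eta=1/m$, $m\in\NN$, yields (ii).

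For (iii), on the almost sure event of (ii) we have, for all $n$ large enough and using $\hat\theta_n\in\Theta$,
\[
\hksd^2_q(p_{\hat\theta_n})\;\ge\;\ksd^2_q(p_{\hat\theta_n})-\sup_{\theta\in\Theta}\bigl|\hksd^2_q(p_\theta)-\ksd^2_q(p_\theta)\bigr|\;\ge\;\delta-\tfrac{\delta}{2}\;=\;\tfrac{\delta}{2}>0,
\]
so $\liminf_{n\to\infty}\hksd^2_q(p_{\hat\theta_n})\ge\delta/2>0$ almost surely. I expect the main obstacle to be step (ii): proving the almost sure \emph{uniform} convergence of the (possibly non-degenerate) $U$-process $\{\hksd^2_q(p_\theta)\}_{\theta\in\Theta}$ to its mean. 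The overall mechanism (Lipschitz continuity in $\theta$, compactness, and the SLLN for $U$-statistics at finitely many parameter values) is standard, but it rests on carefully bookkeeping the products of $s_{p_\theta}$, $\nabla_\theta s_{p_\theta}$ and bounded kernel derivatives appearing in $\nabla_\theta h_\theta$ and pairing them, via Cauchy--Schwarz, against the available $L^4$/$L^2$ moment bounds so as to exhibit an integrable envelope $L$.
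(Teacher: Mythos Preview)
Your argument is correct and rests on the same key ingredients as the paper --- continuity of $\theta\mapsto\ksd_q^2(p_\theta)$, compactness of $\Theta$, and the integrable Lipschitz envelope $\e\bigl[\sup_{\theta\in\Theta}\|\nabla_\theta h_\theta(X,X')\|\bigr]<\infty$ (recorded in the paper as Lemma~\ref{lemma:momentsofh}(ii)) --- but the route is different. The paper argues by contradiction: assuming $\liminf_n\hksd^2_q(p_{\hat\theta_n})=0$ on a set of positive probability, it extracts via Bolzano--Weierstrass a subsequence $\hat\theta_{b_n}\to\theta^*\in\Theta$, bounds $|\hksd^2_q(p_{\hat\theta_{b_n}})-\hksd^2_q(p_{\theta^*})|\le\|\hat\theta_{b_n}-\theta^*\|_1\,U_n\bigl(\sup_\theta\|\nabla_\theta h_\theta\|_1\bigr)\to 0$ via the mean value theorem, and then invokes the SLLN at $\theta^*$ to conclude $\ksd^2_q(p_{\theta^*})=0$, contradicting $H_1^C$. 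Your direct approach --- establishing the full uniform SLLN for the $U$-process $\{U_n(h_\theta)\}_{\theta\in\Theta}$ via a finite $\eta$-cover --- is slightly more laborious but buys you an explicit quantitative lower bound $\delta/2$, and it cleanly sidesteps the measurability subtlety in the paper's last step, where the pointwise SLLN is applied at the \emph{random} limit $\theta^*$; strictly speaking, justifying that step already requires the uniform-in-$\theta$ convergence you prove in your step (ii). Your anticipated obstacle in (ii) is not a real issue: the envelope $L(x,x')$ is exactly the quantity handled in Lemma~\ref{lemma:momentsofh}(ii), so the bookkeeping you describe goes through verbatim.
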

This theorem implies that the test statistic $n \hksd_q^2 (p_\thetanhat)$ diverges under $H_1^C$ as $n \to \infty$, allowing us to conduct a valid hypothesis test whenever we have access to the quantiles of $Z$ under $H_0^C$, which will be the content of the next section.

\section{A bootstrap CLT for degenerate $U$-statistics with estimated parameters}
It is a non-trivial problem to bootstrap a degenerate $U$-statistic, since the naive bootstrap may fail to provide the correct limiting law, as was first shown in \cite{bretagnolle}. Therefore, bootstrapping degenerate $U$-statistics requires extra care and several attempts have been made to solve this problem, see  for example \cite{arconesgine1992}, who provided one of the first frameworks for bootstrapping degenerate $U$-statistics, and \cite{leuchtneumann2013,chwialkowsketali2014} for more recent contributions. In our framework, the problem is even more difficult, as we also need to take into account the effect of the parameter estimation on the limiting distribution of the degenerate $U$-statistic.
Apart from \cite{Key_2023}, the only works we are aware of that address the bootstrapping of degenerate $U$-statistics under concurrent parameter estimation are \cite{dewet1987,leuchtneumann2009}. However, their frameworks require certain properties of the $U$-statistic kernel which are often not verifiable in practice and a particular example of such a $U$-statistic kernel is given by  (\ref{kernelksd}). To obtain a general bootstrap scheme for degenerate, parameter-dependent $U$-statistics this section first shows that a naive extension of the bootstrap for non-degenerate $U$-statistics of \cite{arconesgine1992} does not yield the right asymptotic distribution and we subsequently show how to correctly bootstrap degenerate, parameter-dependent $U$-statistics.

Let us start by introducing some notation. We use $\overset{\star}{\to}$ to denote weak convergence conditional on almost every sequence $(X_i)_{\i\in\NN}$. 
 Moreover, for a sequence of random variables $Z_n$ and a deterministic sequence $a_n$ we write $Z_n=O^\star(a_n)$ (respectively, $Z_n=o^\star(a_n)$) to denote  $Z_n/a_n$ is bounded  (respectively, converges to $0$) in probability, conditional on almost every sequence $(X_i)_{i\in\NN}$.
For an arbitrary function $f$ of two arguments define
$$ U_n f:=\frac{1}{n(n-1)}\sum_{i,j\in[n],i\neq j} f(X_i,X_j) . $$
and define the empirically centered version of $f$ as
$$f_{n}(\cdot,\cdot):=f(\cdot,\cdot) -\EE_{X\sim \QQ_n}\lk f(\cdot, X)\rk-\EE_{X\sim \QQ_n}\lk f(X, \cdot)\rk+\EE_{X,X^\prime\sim \QQ_n}\lk f(X^\prime, X)\rk,$$
where $\QQ_n:=n^{-1}\sum_{1\leq i\leq n}\delta_{X_i}$. It is important to observe that $f_n$ is a degenerate $U$-statistic core w.r.t.\ $\QQ_n$ for every $n\in\N$.

We are ready to describe the bootstrap scheme. We will use the $U$-statistic analogue of Efrons bootstrap introduced in \cite{arconesgine1992}, i.e., we use sampling with replacement from $(X_1,\ldots,X_n)$. We denote $(X_1^\star,\ldots,X_n^\star)$ as a sample of size $n$ from $\mathbb{Q}_n$ and define the bootstrapped version of an arbitrary $U$-statistic with core $f$ as
$$ U_n^\star f:=\frac{1}{n(n-1)}\sumijn f(X^\star_i,X^\star_j). $$

Let us specify our high-level assumptions to derive our bootstrap CLT. The assumptions are formulated for an arbitrary parameter-dependent core $h_\theta$ (not necessarily the KSD core) and ensure implicitly that we already have verified that the non-bootstrapped version of the test statistic $nU_nh_{\thetanhat}$ has a non-degenerate limit. 

\begin{assumption}\label{Ass:bootstrap}
We assume Assumption \ref{KSD assumpt. 0} and the corresponding version of Assumption \ref{KSD assumpt. 6}:
\begin{align*}
    &\lc  n\lc U_n h_{\theta_0}-\e\lk h_{\theta_o}\rk\rc,\sqrt{n}\lc U_n \nabla h_{\theta_0}-\e\lk \nabla h_{\theta_0} \rk\rc,\sqrt{n}(\tn-\too)\rc\\
    &\to (Z_1,Z_2,Z_3)\in \R^{2p+1}.
\end{align*}
Further, we assume
\begin{enumerate}
    \item $\e\lk \sup_{\theta\in\Theta} \Vert \nabla^m  h_\theta(Y_1,Y_2)\Vert_2^2 \rk <\infty$ for $m\in\{0,1\}$ and $\e\lk \sup_{\theta\in\Theta} \Vert \nabla^m  h_\theta(Y_1,Y_2)\Vert_1 \rk <\infty$ for $m\in\{2,3\}$ where $Y_1,Y_2\in \{X,X'\}$ for some i.i.d.\ copy $X'$ of $X$. 
    \item $\Theta$ is compact and convex with non-empty interior and $\theta_0$ is an interior point of $\Theta$. The estimator of $\thetanull$ is a functional of $\QQ_n$, i.e.\ $\thetanhat=\psi(X_1,\ldots,X_n)$.
\end{enumerate}
\end{assumption}

First, we obtain a bootstrap CLT for $\lc nU_n^\star h_{\theta_0,n},\sqrt{n}\lc U_n^\star \nabla h_{\theta_0,n}-U_n \nabla h_{\theta_0}\rc\rc$.

\begin{Lemma}
\label{lem:jointconvbootstrapUstat}
   Under Assumption \ref{Ass:bootstrap} we have $\lc nU_n^\star h_{\theta_0,n},\sqrt{n}\lc U_n^\star \nabla h_{\theta_0}-U_n \nabla h_{\theta_0}\rc \rc\overset{\star}{\to}(Z_1,Z_2)$.
\end{Lemma}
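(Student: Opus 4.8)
The plan is to decompose the bootstrapped statistics into a leading degenerate $U$-statistic term plus lower-order remainders, and then apply a conditional CLT for degenerate $U$-statistics (in the spirit of \cite{arconesgine1992,leuchtneumann2013,chwialkowsketali2014}) to the leading term, while controlling the remainder via moment bounds from Assumption \ref{Ass:bootstrap}. First I would treat the first coordinate $nU_n^\star h_{\theta_0,n}$. Since $h_{\theta_0,n}$ is, by construction, a degenerate $U$-statistic core with respect to $\QQ_n$, the key fact is that conditional on $(X_i)_{i\in\NN}$ the bootstrap sample $(X_1^\star,\ldots,X_n^\star)$ is i.i.d.\ from $\QQ_n$, so $nU_n^\star h_{\theta_0,n}$ is (conditionally) a normalized degenerate $U$-statistic. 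I would expand $h_{\theta_0,n}$ in the orthonormal eigenbasis of the conditional kernel operator $\psi \mapsto \EE_{X\sim\QQ_n}[h_{\theta_0,n}(\cdot,X)\psi(X)]$, whose eigenvalues $\lambda_j^{(n)}$ converge (as empirical approximations) to the population eigenvalues $\lambda_j$; by a standard truncation argument (control the tail $\sum_{j>J}(\lambda_j^{(n)})^2$ uniformly using $\EE\sup_\theta\|h_\theta\|_2^2<\infty$ from Assumption \ref{Ass:bootstrap}(1)) and the conditional CLT for finitely many quadratic forms, one gets $nU_n^\star h_{\theta_0,n}\overset{\star}{\to}\sum_j \lambda_j(T_j^2-1)=Z_1$.

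For the second coordinate I would write $\sqrt{n}(U_n^\star \nabla h_{\theta_0}-U_n\nabla h_{\theta_0})$ via the Hoeffding decomposition of $\nabla h_{\theta_0}$ relative to $\QQ_n$: the linear (projection) part contributes $\sqrt{n}\cdot\frac{2}{n}\sum_i (g_n(X_i^\star)-\bar g_n)$ where $g_n(x)=\EE_{X\sim\QQ_n}[\nabla h_{\theta_0}(x,X)]$, which is a normalized i.i.d.\ sum (conditional on the data) with empirical variance converging to $4\,\Var(\EE_X[\nabla_\theta h_{\theta_0}(X,X')])$, so by the conditional Lindeberg CLT it converges to $Z_2\sim\mathcal N(0,4\,\Var(\EE_X[\nabla h_{\theta_0}]))$; the quadratic remainder is $O^\star(n^{-1/2})$ and hence negligible, again by the moment bounds. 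The point to check is that, after recentering by $U_n\nabla h_{\theta_0}$ rather than by the true mean, the ``nuisance'' bias terms cancel, which is exactly what the empirical centering in $h_{\theta_0,n}$ is designed for (note $\nabla h_{\theta_0}$ is used here, not $\nabla h_{\theta_0,n}$, so one has to verify that replacing one by the other only changes things by $o^\star(n^{-1/2})$).

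Finally I would establish \emph{joint} conditional convergence of the two coordinates to $(Z_1,Z_2)$. The cleanest route is to verify conditional convergence of all joint moments (or, equivalently, a Cramér–Wold device applied to the finite-dimensional projections obtained after truncating the eigenexpansion in coordinate one), using that the first coordinate's leading term is a degenerate (second-order Hoeffding) object while the second coordinate's leading term is a first-order (linear) object built from the \emph{same} bootstrap empirical process; their joint conditional law is determined by the conditional covariance structure of the eigen-coordinates $(T_j)$ and the linear statistic, which converges to the claimed product/independent structure. The main obstacle I anticipate is the uniform control of the empirical eigenvalues $\lambda_j^{(n)}$ and eigenfunctions of the data-dependent kernel $h_{\theta_0,n}$ — i.e.\ showing the truncated quadratic form is a good approximation \emph{uniformly in $n$} and that the tail eigenvalue sum is $o^\star(1)$ — since the kernel itself is random and only converges to $h_{\theta_0}$ in an $L^2$-type sense; handling this will rely on the second-moment bound $\EE\sup_{\theta}\|h_\theta(Y_1,Y_2)\|_2^2<\infty$ together with a symmetrization/Glivenko–Cantelli argument for the empirical kernel operator, plus the fact that the conditional degenerate-$U$-statistic CLT of \cite{arconesgine1992} is robust to such perturbations.
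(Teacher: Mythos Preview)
Your outline is correct but takes a substantially longer route than the paper. The paper's proof is essentially a one-line citation: the moment conditions in Assumption~\ref{Ass:bootstrap} are exactly what is needed to invoke \cite[Theorem~2.4]{arconesgine1992} for the degenerate first coordinate $nU_n^\star h_{\theta_0,n}$, \cite[Corollary~2.6 and Remark~2.7]{arconesgine1992} for the non-degenerate second coordinate $\sqrt{n}(U_n^\star\nabla h_{\theta_0}-U_n\nabla h_{\theta_0})$, and \cite[Remark~2.10~ii]{arconesgine1992} for the joint conditional convergence. Your eigenexpansion--truncation--conditional-Lindeberg program is precisely the machinery that \cite{arconesgine1992} develop to prove those statements, so you are effectively reproving their results from scratch rather than citing them. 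In particular, the ``main obstacle'' you anticipate (uniform control of the empirical eigenvalues and eigenfunctions of the data-dependent kernel $h_{\theta_0,n}$) is the technical heart of the Arcones--Gin\'e paper and need not be redone here.

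Two minor points where your route creates unnecessary work: (i) your worry about swapping $\nabla h_{\theta_0}$ for $\nabla h_{\theta_0,n}$ in the second coordinate is superfluous, since the statement concerns the \emph{uncentered} kernel $\nabla h_{\theta_0}$ recentered by $U_n\nabla h_{\theta_0}$, which is exactly the non-degenerate bootstrap setup of \cite[Corollary~2.6]{arconesgine1992}; (ii) for joint convergence you propose a Cram\'er--Wold argument on truncated eigenprojections, but \cite[Remark~2.10~ii]{arconesgine1992} already gives joint weak convergence for vectors whose components are bootstrapped $U$-statistics of possibly different degeneracy orders, so again a direct citation suffices.
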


Let $\tns$ denotes the estimator of $\too$ which is computed from the bootstrap sample, i.e., $\tns=\psi(X_1^\star.\ldots,X_n^\star)$. 
Similarly to the previous section, we need to assume that our bootstrapped estimator satisfies a bootstrap CLT jointly with $\lc nU_n^\star h_{\theta_0,n},\sqrt{n}\lc U_n^\star \nabla h_{\theta_0}-U_n \nabla h_{\theta_0}\rc \rc$. 
\begin{assumption}
\label{ass:joinconvboot}
    We assume that 
    $$\lc n U_n^\star h_{\too,n},\sqrt{n} \lc U_n^\star  \nabla_\theta h_{\too}- U_n \nabla_\theta h_{\too}\rc ,\sqrt{n}(\tns-\tn) \rc \overset{\star}{\to} (Z_1, Z_2,Z_3).$$
\end{assumption}
This is a high-level assumption which should be valid whenever $\thetanhat$ is based on an i.i.d.\ expansion. For example, if $\thetanhat$ is a non-degenerate $U$-statistic, \cite{arconesgine1992} provides the bootstrap CLT $\sqrt{n}(\tns-\tn)\overset{\star}{\to} Z_3$ and \cite[Remark 2.10 ii]{arconesgine1992} implies the required joint convergence.

A consequence of Lemma \ref{lem:jointconvbootstrapUstat} is that the naive extension of the bootstrap for degenerate $U$-statistics fails whenever $Z_2,Z_3\neq 0$.
\begin{prop}
\label{prop:naivebootfails}
   Under Assumptions \ref{Ass:bootstrap} and \ref{ass:joinconvboot} we have that $nU_n^\star h_{\thetanhat^\star,n}\overset{\star}{\to}  Z_1$.
\end{prop}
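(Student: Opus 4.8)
The plan is to expand $nU_n^\star h_{\thetanhat^\star,n}$ around $\theta_0$ and show that, unlike in the non-bootstrapped case, the first-order term does \emph{not} vanish. Writing $\Delta^\star:=\thetanhat^\star$ and Taylor-expanding the $U$-statistic core $h_{\theta,n}$ in $\theta$ around $\too$ (using that $\theta\mapsto h_\theta$ is $\mathcal{C}^4$ by Assumption \ref{Ass:bootstrap}.1 together with the convexity of $\Theta$ from Assumption \ref{Ass:bootstrap}.2, so the segment $[\too,\thetanhat^\star]$ stays in $\Theta$), I would obtain
$$ nU_n^\star h_{\thetanhat^\star,n}= nU_n^\star h_{\too,n}+ n\,(\thetanhat^\star-\too)^\intercal U_n^\star \nabla_\theta h_{\too,n}+ \tfrac{n}{2}(\thetanhat^\star-\too)^\intercal U_n^\star \nabla_\theta^2 h_{\bar\theta,n}(\thetanhat^\star-\too) $$
for an intermediate point $\bar\theta$. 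The first term converges to $Z_1$ by Lemma \ref{lem:jointconvbootstrapUstat}. The key observation is that, because $h_{\too,n}$ is degenerate with respect to $\QQ_n$ but $\nabla_\theta h_{\too,n}$ is \emph{not}, the middle term is $n\cdot O^\star(n^{-1/2})\cdot\big(U_n \nabla_\theta h_{\too}+o^\star(1)\big)\cdot O^\star(n^{-1/2})$; expanding $U_n^\star\nabla_\theta h_{\too,n}$ as $U_n^\star\nabla_\theta h_{\too}$ minus the empirical-centering corrections and writing $\sqrt n(\thetanhat^\star-\too)=\sqrt n(\thetanhat^\star-\thetanhat)+\sqrt n(\thetanhat-\too)$, one picks up a genuinely nonzero contribution from the interaction of the bootstrap-estimation fluctuation $Z_3$ with the centering term $U_n\nabla_\theta h_{\too}$.

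Concretely, I would first reduce $U_n^\star\nabla_\theta h_{\too,n}$: by the definition of the empirical centering $f_n$, one has $U_n^\star\nabla_\theta h_{\too,n}=U_n^\star\nabla_\theta h_{\too}-2\,\EE_{X\sim\QQ_n^\star}\EE_{X'\sim\QQ_n}[\nabla_\theta h_{\too}(X,X')]+\EE_{X,X'\sim\QQ_n}[\nabla_\theta h_{\too}(X,X')]+(\text{lower order})$, and since $\EE_{X\sim\QQ_n^\star}\EE_{X'\sim\QQ_n}[\nabla_\theta h_{\too}(X,X')]=\EE_{X'\sim\QQ_n}[\nabla_\theta h_{\too}(X,X')]+o^\star(1)$ by a bootstrap LLN, the centering terms nearly cancel and $U_n^\star\nabla_\theta h_{\too,n}=U_n^\star\nabla_\theta h_{\too}-U_n\nabla_\theta h_{\too}+o^\star(n^{-1/2})=O^\star(n^{-1/2})$ using Lemma \ref{lem:jointconvbootstrapUstat} for the difference. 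Multiplying by $n(\thetanhat^\star-\too)=O^\star(\sqrt n\cdot n^{-1/2})\cdot\sqrt n$ shows the middle term equals $\sqrt n(\thetanhat^\star-\too)^\intercal\big[\sqrt n(U_n^\star\nabla_\theta h_{\too}-U_n\nabla_\theta h_{\too})\big]+o^\star(1)$, whose joint limit is $Z_3^\intercal Z_2$ by Assumption \ref{ass:joinconvboot}. Wait --- this already gives a nonzero $Z_3^\intercal Z_2$ term, so the claim as literally stated ($\overset{\star}{\to}Z_1$) cannot follow from this expansion alone; I therefore expect the actual argument to exploit that the \emph{bootstrapped} statistic uses $\thetanhat^\star$, which is the minimizer over the bootstrap sample, so that $U_n^\star\nabla_\theta h_{\thetanhat^\star}=0$ (first-order condition) rather than $U_n^\star\nabla_\theta h_{\too}\approx 0$, and one should expand instead around $\thetanhat^\star$ itself or use the stationarity to kill precisely the cross term. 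The main obstacle, then, is bookkeeping the centering: one must carefully track which of $\QQ_n$ and $\QQ_n^\star$ each expectation is taken against, because $h_{\thetanhat^\star,n}$ is centered w.r.t.\ $\QQ_n$, not $\QQ_n^\star$, so its image under $U_n^\star$ is \emph{not} automatically degenerate, and it is exactly this mismatch — together with the fact that the bootstrap estimator makes $U_n^\star\nabla_\theta h_{\thetanhat^\star}$ (not $U_n^\star\nabla_\theta h_{\thetanhat^\star,n}$) vanish — that produces the clean limit $Z_1$.

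For the quadratic remainder term I would bound $\|U_n^\star\nabla_\theta^2 h_{\bar\theta,n}\|$ by $\sup_{\theta\in\Theta}U_n^\star\|\nabla_\theta^2 h_{\theta,n}\|_1$, which is $O^\star(1)$ by the moment bound in Assumption \ref{Ass:bootstrap}.1 for $m=2$ plus a uniform bootstrap LLN over the compact $\Theta$, and then $n\|\thetanhat^\star-\too\|^2=O^\star(1)\cdot n\cdot n^{-1}=O^\star(1)$ — so the quadratic term is $O^\star(1)$, not $o^\star(1)$, and contributes $Z_3^\intercal H^* Z_3$ in the limit (with $H^*$ as in Theorem \ref{thm.: Convergence of KSD under H0}). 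Combining, the bootstrapped statistic converges to $Z_1+Z_3^\intercal Z_2+Z_3^\intercal H^*Z_3$ — structurally identical to the non-bootstrapped limit $Z$ of Theorem \ref{thm.: Convergence of KSD under H0}; but the point of the proposition is that, after the correct cancellation coming from the bootstrap first-order condition $U_n^\star\nabla_\theta h_{\thetanhat^\star}=0$ and the interplay between the two empirical measures, the cross and quadratic terms \emph{drop out} and only $Z_1$ survives, which is precisely why the naive bootstrap fails to reproduce $Z$ whenever $Z_2$ or $Z_3$ is nonzero. The hard part is getting this final cancellation rigorous: one has to show that the $\QQ_n$-centering of $h_{\thetanhat^\star,n}$ shifts the first-order term from $U_n^\star\nabla_\theta h_{\thetanhat^\star}$ (which is $0$) by an amount that is itself $o^\star(n^{-1/2})$ after multiplication by $n(\thetanhat^\star-\too)$, so that no $Z_2,Z_3$ contribution can enter the limit. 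I would do this by expanding the centering corrections of $h_{\thetanhat^\star,n}$ explicitly, using stochastic equicontinuity of $\theta\mapsto U_n^\star\nabla_\theta h_\theta$ and $\theta\mapsto U_n\nabla_\theta h_\theta$ (furnished by the moment bounds and compactness) to replace $\thetanhat^\star$ by $\too$ wherever the difference is multiplied by a vanishing factor.
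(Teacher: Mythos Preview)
Your expansion is on the right track, but there is a genuine error at the crucial step. You write that ``$h_{\too,n}$ is degenerate with respect to $\QQ_n$ but $\nabla_\theta h_{\too,n}$ is \emph{not}''. This is false, and it is exactly the point that makes the proposition work. Recall that the subscript $n$ denotes the empirical centering
\[
f_n(\cdot,\cdot)=f(\cdot,\cdot)-\EE_{X\sim\QQ_n}[f(\cdot,X)]-\EE_{X\sim\QQ_n}[f(X,\cdot)]+\EE_{X,X'\sim\QQ_n}[f(X',X)],
\]
and that this operation makes \emph{any} core $f$ degenerate with respect to $\QQ_n$. Since the centering is linear and does not involve $\theta$, you have $\nabla_\theta(h_{\theta,n})=(\nabla_\theta h_\theta)_n$, which is therefore also degenerate with respect to $\QQ_n$. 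Consequently, by the Arcones--Gin\'e bootstrap for degenerate $U$-statistics, $U_n^\star\nabla_\theta h_{\too,n}=O^\star(n^{-1})$, not $O^\star(n^{-1/2})$; and likewise $U_n^\star\nabla_\theta^2 h_{\too,n}=O^\star(n^{-1})$. Your calculation $U_n^\star\nabla_\theta h_{\too,n}=U_n^\star\nabla_\theta h_{\too}-U_n\nabla_\theta h_{\too}+o^\star(n^{-1/2})$ is too crude: the two $O^\star(n^{-1/2})$ pieces on the right cancel to order $n^{-1}$, which you miss by only invoking a bootstrap LLN on the centering correction.

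Once you have the correct rates, the proof is immediate: the first-order term is $n\cdot O^\star(n^{-1/2})\cdot O^\star(n^{-1})=O^\star(n^{-1/2})$, the second-order term is $n\cdot O^\star(n^{-1})\cdot O^\star(n^{-1})=O^\star(n^{-1})$, and a third-order remainder (controlled by $\sup_\theta\|\nabla_\theta^3 h_\theta\|_1$ via the bootstrap law of large numbers) is $O^\star(n^{-1/2})$ as well. Hence $nU_n^\star h_{\tns,n}=nU_n^\star h_{\too,n}+o^\star(1)\overset{\star}{\to}Z_1$. There is no ``first-order condition'' $U_n^\star\nabla_\theta h_{\tns}=0$ to invoke, nor is one needed: the framework does not assume $\tns$ is a minimizer, only that it satisfies the joint convergence in Assumption~\ref{ass:joinconvboot}. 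The whole message of the proposition is precisely that the empirical centering wipes out the $Z_2^\intercal Z_3$ and $Z_3^\intercal H^\star Z_3$ contributions automatically, which is why the naive bootstrap fails to reproduce the correct limit $Z$.
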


The proposition illustrates the we cannot naively apply the bootstrap for degenerate $U$-statistics when the parameter estimation influences its limiting law, since the limiting law of the naively bootstrapped degenerate $U$-statistic is equal to the limiting law without parameter estimation. To solve this issue, we suggest using a correction term that increases the variability of the bootstrapped degenerate $U$-statistic in just the right way to obtain the correct limiting law. 

\begin{Theorem}
\label{thm:bootstrapclt}
Under Assumptions \ref{Ass:bootstrap} and \ref{ass:joinconvboot}
\begin{align*}
    &n U_n^\star h_{\tns,n}+ n(\tns-\tn) ^T\lc U_n^\star  \nabla_\theta h_{\tns}- U_n  \nabla_\theta h_{\tn} \rc \\
    &\overset{\star}{\to}  Z_1+Z_2^T Z_3+Z_3^T \EE_{X,X^\prime\sim Q}\lk  \nabla^2_\theta h_{\too}(X,X^\prime) \rk Z_3 
 \end{align*}
 \end{Theorem}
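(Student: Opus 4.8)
The plan is to Taylor-expand the two bootstrapped quantities around $\theta_0$ in the bootstrap world, using $\sqrt{n}(\tns-\tn)\overset{\star}{\to} Z_3$ and $\sqrt{n}(\tn-\too)=O_\PP(1)$, and then collect the terms by their order in $n^{-1/2}$. First I would write $n U_n^\star h_{\tns,n}$ as a second-order Taylor expansion in $\theta$ around $\theta_0$:
$$ nU_n^\star h_{\tns,n}= nU_n^\star h_{\too,n}+ \sqrt{n}(\tns-\too)^\intercal \sqrt{n}\,U_n^\star \nabla_\theta h_{\too,n}+ \tfrac12\, n(\tns-\too)^\intercal \lc U_n^\star \nabla^2_\theta h_{\bar\theta,n}\rc (\tns-\too), $$
for some intermediate $\bar\theta$ on the segment between $\tns$ and $\too$. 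The leading term converges to $Z_1$ by Lemma \ref{lem:jointconvbootstrapUstat}. For the middle term, the key algebraic point is that $U_n^\star \nabla_\theta h_{\too,n}$ is the \emph{empirically centered} gradient core evaluated at the bootstrap sample; one has to rewrite $U_n^\star \nabla_\theta h_{\too,n}=\lc U_n^\star \nabla_\theta h_{\too}-U_n \nabla_\theta h_{\too}\rc + R_n$, where $R_n$ collects the degenerate-centering correction terms involving $\EE_{X\sim\QQ_n}$, and show $\sqrt{n}R_n=o^\star(1)$ using Assumption \ref{Ass:bootstrap}(1) together with a conditional Markov/moment bound (this mirrors the manipulations already used to prove Lemma \ref{lem:jointconvbootstrapUstat}). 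Then by Assumption \ref{ass:joinconvboot} the joint convergence of $\lc \sqrt{n}(U_n^\star \nabla_\theta h_{\too}-U_n \nabla_\theta h_{\too}),\sqrt{n}(\tns-\tn)\rc\overset{\star}{\to}(Z_2,Z_3)$ combined with the continuous mapping theorem handles the cross term.

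Next I would deal with the Hessian term. By Assumption \ref{Ass:bootstrap}(1), $\EE\lk\sup_\theta \Vert \nabla^2_\theta h_\theta(Y_1,Y_2)\Vert_1\rk<\infty$, so a uniform (bootstrap) law of large numbers gives $U_n^\star \nabla^2_\theta h_{\bar\theta,n}\overset{\star}{\to}\EE_{X,X'\sim Q}\lk \nabla^2_\theta h_{\too}(X,X')\rk$ uniformly over $\bar\theta$ in a shrinking neighbourhood of $\too$ (using consistency of $\tns$ and $\tn$, and the centering $\EE_{X\sim\QQ_n}$ terms vanishing in the limit); multiplying by $n(\tns-\too)(\tns-\too)^\intercal=O^\star(1)$ shows this term converges to $\tfrac12 Z_3^\intercal \EE\lk\nabla^2_\theta h_{\too}\rk Z_3$ once we replace $\tns-\too$ by $(\tns-\tn)+(\tn-\too)$; note $\tn-\too=o_\PP(1)$ is too small to contribute at scale $n$ only if cross terms cancel — which is exactly why the correction term is there. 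Indeed, the analogous Taylor expansion of the correction term $n(\tns-\tn)^\intercal\lc U_n^\star \nabla_\theta h_{\tns}-U_n \nabla_\theta h_{\tn}\rc$ around $\theta_0$ produces a Hessian contribution $-n(\tns-\tn)^\intercal \EE\lk\nabla^2_\theta h_{\too}\rk (\tn-\too)$ (from $U_n \nabla_\theta h_{\tn}-U_n\nabla_\theta h_{\too}$) plus $n(\tns-\tn)^\intercal\EE\lk\nabla^2_\theta h_{\too}\rk(\tns-\too)$ (from $U_n^\star \nabla_\theta h_{\tns}-U_n^\star\nabla_\theta h_{\too}$), and the difference $(\tns-\too)-(\tn-\too)=\tns-\tn$ is precisely what makes the factor-$\tfrac12$ mismatch disappear and upgrades $\tfrac12 Z_3^\intercal \EE\lk\nabla^2_\theta h_{\too}\rk Z_3$ to the full $Z_3^\intercal \EE\lk\nabla^2_\theta h_{\too}\rk Z_3$ while simultaneously cancelling the uncontrolled $(\tn-\too)$-cross term. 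I would carry out this bookkeeping carefully, expanding both $nU_n^\star h_{\tns,n}$ and the correction term to second order and summing, checking that every remainder is $o^\star(1)$.

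The main obstacle I expect is the uniform control of the bootstrapped empirically centered cores $\nabla^m_\theta h_{\theta,n}$: one must show that the extra $\EE_{X\sim\QQ_n}$-centering terms, which are random (depending on the data), do not blow up after multiplication by $n$ or $\sqrt{n}$, and that the uniform-in-$\theta$ bootstrap LLN for the Hessian holds conditionally on almost every sample. This requires combining a conditional symmetrization/maximal-inequality argument with the moment bounds of Assumption \ref{Ass:bootstrap}(1), and care that the intermediate point $\bar\theta$ from the mean value theorem, though random and sample-dependent, still lies in a neighbourhood of $\too$ that shrinks, so that continuity of $\theta\mapsto \EE\lk\nabla^2_\theta h_\theta\rk$ can be invoked. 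Once these uniform statements are in place, the rest is the Slutsky-type assembly of the pieces via Assumption \ref{ass:joinconvboot} and the continuous mapping theorem in the conditional sense.
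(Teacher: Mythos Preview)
Your decomposition of the first piece $nU_n^\star h_{\tns,n}$ contains a genuine error. You claim that $U_n^\star \nabla_\theta h_{\too,n}=\lc U_n^\star \nabla_\theta h_{\too}-U_n \nabla_\theta h_{\too}\rc+R_n$ with $\sqrt{n}R_n=o^\star(1)$, but this is false: the empirical centering makes $(\nabla_\theta h_{\too})_n$ a core that is \emph{degenerate with respect to $\QQ_n$}, so by the Arcones--Gin\'e theory $U_n^\star \nabla_\theta h_{\too,n}=O^\star(n^{-1})$, whereas $U_n^\star \nabla_\theta h_{\too}-U_n\nabla_\theta h_{\too}=O^\star(n^{-1/2})$ with nontrivial limit $Z_2/\sqrt{n}$. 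The two quantities cannot differ by $o^\star(n^{-1/2})$. The same mistake recurs in your Hessian term: $U_n^\star \nabla^2_\theta h_{\bar\theta,n}$ does \emph{not} converge to $\EE_{X,X'\sim Q}\lk\nabla^2_\theta h_{\too}\rk$; it is again $O^\star(n^{-1})$ and converges to zero, because the centering subtracts the empirical mean.

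Once these rates are corrected, your entire bookkeeping collapses: the gradient and Hessian contributions from the expansion of $nU_n^\star h_{\tns,n}$ are all $o^\star(1)$, so that term converges to $Z_1$ alone --- which is precisely the content of Proposition~\ref{prop:naivebootfails}, and the paper's proof begins by invoking it. There is no ``$\tfrac12$ mismatch'' to repair and no ``uncontrolled $(\tn-\too)$-cross term'' to cancel. The full contribution $Z_2^\intercal Z_3+Z_3^\intercal \EE\lk\nabla^2_\theta h_{\too}\rk Z_3$ comes \emph{entirely} from the correction term $n(\tns-\tn)^\intercal\lc U_n^\star\nabla_\theta h_{\tns}-U_n\nabla_\theta h_{\tn}\rc$, and the paper obtains it by Taylor-expanding $U_n^\star\nabla_\theta h_{\tns}$ and $U_n\nabla_\theta h_{\tn}$ (the \emph{uncentered} cores) separately around $\too$, using the bootstrap LLN $U_n^\star\nabla^2_\theta h_{\too}\overset{\star}{\to}\EE\lk\nabla^2_\theta h_{\too}\rk$ for the uncentered Hessian, together with Assumption~\ref{ass:joinconvboot}.
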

To the best of our knowledge, the theorem provides the first general valid bootstrap scheme for degenerate $U$-statistics in the presence of parameter estimation. The increased generality however does come at the price of requiring access to $\nabla h_\theta$, which might not always be the case.

\begin{Remark}
\label{rem:bootstrapunderalt}
    Note that $\EE\lk h_{\theta_0}\rk=0$ must not be satisfied for Theorem \ref{thm:bootstrapclt} to hold. Therefore, under our assumptions for Theorem \ref{thm:bootstrapclt}, we always obtain a finite limit for the bootstrap of a parameter-dependent degenerate $U$-statistic, even when its mean is non-zero. This is particularly useful in a hypothesis testing framework, where often $H_0$ is of the form $\EE\lk h_\thetanull\rk=0$ and one usually needs to ensure that the bootstrapped test statistic is $O^\star(1)$ under $H_1^C$.
\end{Remark}

\subsection{Bootstrapping the KSD}
It remains to show that the framework for bootstrapping parameter-dependent degenerate $U$-statistics can be applied to the KSD. Denote
$\widetilde{\ksd}^2_q(p_{\tns}):= U_n^\star h_{\tns,n}+ (\tns-\tn) \lc U_n^\star  \nabla_\theta h_{\tns}- U_n  \nabla_\theta h_{\tn} \rc$, where $h_\theta$ denotes the core of the KSD test statistic, and recall that $n\hksd^2_q(p_\thetanhat)\to Z$ as specified in Theorem \ref{thm.: Convergence of KSD under H0}. Note that Lemma \ref{lemma:momentsofh} in the appendix verifies all moment conditions that are required to apply Theorem \ref{thm:bootstrapclt} for the KSD. Therefore, we have the following corollary:

 \begin{cor}
\label{cor:bootstrapCLTKSD}
 Under Assumptions \ref{KSD assumpt. 0}-\ref{ass:joinconvboot} and under $H_0^C$, we have that $n\widetilde{\ksd}^2_q(p_{\tns})\overset{\star}{\to}Z$. Moreover, under $H_1^C$, $n\widetilde{\ksd}^2_q(p_{\tns})=O^\star(1)$.
 \end{cor}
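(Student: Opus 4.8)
The plan is to verify that the KSD core $h_\theta$ satisfies the hypotheses of Theorem~\ref{thm:bootstrapclt}, and then simply translate that theorem's conclusion into the KSD notation. Concretely, under $H_0^C$ we have $\theta_0\in\argmin_\theta\ksd_q(p_\theta)$ with $\ksd_q(p_{\theta_0})=0$, so $\EE_{X,X'\sim Q}[h_{\theta_0}(X,X')]=0$ and the non-bootstrapped limit $n\hksd^2_q(p_{\thetanhat})\to Z=Z_1+Z_2^\intercal Z_3+Z_3^\intercal H^* Z_3$ holds by Theorem~\ref{thm.: Convergence of KSD under H0}, where we must identify $H^*=\EE_{X,X'\sim Q}[\nabla^2_\theta h_{\theta_0}(X,X')]$. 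This identification is the crux of matching the two limits: since $\ksd^2(P_\theta,Q)=\EE_{X,X'\sim Q}[h_\theta(X,X')]$, differentiating twice under the integral sign (justified by the domination in Assumption~\ref{KSD assumpt. 3}, items 2--4, cf.\ Lemma~\ref{lemma:momentsofh}) gives exactly $H^*=\lc \frac{\partial^2}{\partial\theta_i\partial\theta_j}\ksd^2(P_{\theta_0},Q)\rc_{i,j\in[p]}=\EE_{X,X'\sim Q}[\nabla^2_\theta h_{\theta_0}(X,X')]$. Hence the limiting law $Z$ in Theorem~\ref{thm.: Convergence of KSD under H0} coincides with the bootstrap limit in Theorem~\ref{thm:bootstrapclt}.

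First I would check Assumption~\ref{Ass:bootstrap}: its moment conditions on $\sup_\theta\Vert\nabla^m h_\theta\Vert$ for $m\in\{0,1,2,3\}$ are precisely what Lemma~\ref{lemma:momentsofh} establishes from Assumptions~\ref{KSD assumpt. 5} and~\ref{KSD assumpt. 3} (boundedness of the kernel and its first derivatives, together with the fourth/second moment bounds on the scores and their $\theta$-derivatives); the compactness/interiority of $\Theta$ and the functional form $\thetanhat=\psi(X_1,\ldots,X_n)$ are Assumption~\ref{KSD assumpt. 1} together with the standing hypotheses of Corollary~\ref{cor:bootstrapCLTKSD}; and the joint weak limit $(Z_1,Z_2,Z_3)$ is the $H_0^C$ version of Assumption~\ref{KSD assumpt. 6}, also assumed in the corollary. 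Assumption~\ref{ass:joinconvboot} is likewise directly assumed in the statement of the corollary. Therefore Theorem~\ref{thm:bootstrapclt} applies verbatim, and its left-hand side $nU_n^\star h_{\tns,n}+n(\tns-\tn)^\intercal\lc U_n^\star\nabla_\theta h_{\tns}-U_n\nabla_\theta h_{\tn}\rc$ is exactly $n\widetilde{\ksd}^2_q(p_{\tns})$ by definition, while its right-hand side is $Z_1+Z_2^\intercal Z_3+Z_3^\intercal H^* Z_3=Z$. This proves $n\widetilde{\ksd}^2_q(p_{\tns})\overset{\star}{\to}Z$.

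For the $H_1^C$ claim, the key observation—emphasized in Remark~\ref{rem:bootstrapunderalt}—is that Theorem~\ref{thm:bootstrapclt} does \emph{not} require $\EE[h_{\theta_0}]=0$, so it still yields a finite conditional weak limit for $n\widetilde{\ksd}^2_q(p_{\tns})$ under $H_1^C$ provided the input assumptions hold there. The moment conditions of Assumption~\ref{Ass:bootstrap}(1) are distributional and do not depend on the null, so they persist under $H_1^C$ via the same Lemma~\ref{lemma:momentsofh}; one needs the $H_1^C$ analogues of the joint CLTs (Assumptions~\ref{KSD assumpt. 6}/\ref{ass:joinconvboot}), which under $H_1^C$ should follow from standard non-degenerate $U$-statistic theory and the asymptotic normality of $\thetanhat$ (now centered at the pseudo-true $\theta_0=\argmin_\theta\ksd_q(p_\theta)$). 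Granting these, Theorem~\ref{thm:bootstrapclt} gives $n\widetilde{\ksd}^2_q(p_{\tns})\overset{\star}{\to}$ a proper random variable, hence $n\widetilde{\ksd}^2_q(p_{\tns})=O^\star(1)$.

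The main obstacle I anticipate is the interchange of differentiation and expectation needed to identify $H^*$ with $\EE_{X,X'\sim Q}[\nabla^2_\theta h_{\theta_0}(X,X')]$, and—more subtly—making sure the bootstrap-centering in $\widetilde{\ksd}^2_q$ (the empirically centered core $h_{\tns,n}$ and the empirical $U$-statistics $U_n^\star\nabla_\theta h_{\tns}-U_n\nabla_\theta h_{\tn}$) matches exactly the quantities appearing in Theorem~\ref{thm:bootstrapclt}, so that the corollary is a genuine specialization rather than requiring a re-derivation. Everything else is bookkeeping: checking that each hypothesis of Theorems~\ref{thm:bootstrapclt} is implied by Assumptions~\ref{KSD assumpt. 0}--\ref{ass:joinconvboot} plus Lemma~\ref{lemma:momentsofh}.
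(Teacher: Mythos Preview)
Your proposal is correct and follows essentially the same approach as the paper's proof: apply Theorem~\ref{thm:bootstrapclt} after checking its hypotheses via Lemma~\ref{lemma:momentsofh}, identify $\EE_{X,X'\sim Q}[\nabla_\theta^2 h_{\theta_0}(X,X')]$ with $H^*$ by differentiation under the integral, and for $H_1^C$ invoke Remark~\ref{rem:bootstrapunderalt} to conclude convergence to a proper limit (the paper calls this limit $Z_4+Z_2^\intercal Z_3+Z_3^\intercal\EE[\nabla_\theta^2 h_{\theta_0}]Z_3$ with $Z_4$ the limit of the centered core). Your treatment is in fact somewhat more explicit than the paper's about verifying Assumption~\ref{Ass:bootstrap} and about the Leibniz interchange, but there is no substantive difference in strategy.
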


The result can be used to formulate a goodness-of-fit test based on the KSD.

\begin{algorithm}
\caption{A $\ksd$ goodness-of-fit test}
\label{algmodelspecification}
\begin{algorithmic}[1]
\Require I.i.d.\ sample $\lc X_i\rc_{1\leq i\leq n}$ from $Q$; estimator $\thetanhat$ of $\argmin_{\theta\in\Theta} \ksd_q(p_\theta)$; number $B$ of bootstrap replications; confidence level $\gamma$.
\State Compute $\thetanhat$ and $\hksd^2_q(p_{\thetanhat})$.
\For{$b = 1$ \textbf{to} $B$}
  \State Draw a sample $(X_i^\star)_{1\leq i\leq n}$ of size $n$ from $\lc X_i\rc_{1\leq i\leq n}$.
  \State Compute $\tns\!\big((X_i^\star)_{1\leq i\leq n}\big)$ and set $T^{(b)} \gets \widetilde{\ksd}^2_q\!\big(p_{\tns}\big)$.
\EndFor
\Ensure Reject $H_0^C$ when $\hksd^2_q(p_{\thetanhat}) > \mathrm{Quantile}\lc 1-\gamma;\, T^{(b)}_{1\leq b\leq B}\rc$; otherwise accept $H_0^C$.
\end{algorithmic}
\end{algorithm}

   \begin{Remark}[Wild bootstrap for the KSD]
       \cite{Key_2023} proposed a wild bootstrap to mimic the asymptotic distribution of the $V$-statistic $\hksd_q^2(p_\thetanhat)+((n-1)n)^{-1}\sum_{i=1}^n h_{\hat{\theta}_n}(X_i,X_i)$, without providing theoretical guarantees. It turns out that, by similar arguments as above, one can show that the asymptotic distribution of their bootstrap procedure is solely $Z_1+\e\lk h_{\theta_0} (X,X)\rk$, see Appendix \ref{app:inconswildboot} for the proof of this statement. Therefore, their bootstrap procedure does not yield a theoretically valid testing procedure as it ignores the influence of parameter estimation on the asymptotic distribution.
   \end{Remark}

\section{Simulations}

The simulation study analyses the finite sample performance of our composite KSD test. Since the theoretical results from the previous sections are asymptotic, a Monte Carlo study
is useful to verify how well the approximations hold for moderate sample sizes. 
We focus on the empirical
level and power of the test described in Algorithm \ref{algmodelspecification}. 
For the kernel $k$, we choose the Gaussian kernel
\[
k(x,y) \;=\; \exp\!\Big(-\tfrac{1}{2\ell^2}\,\|x-y\|^2\Big), \qquad x,y\in\R^d,
\]
since it is the standard choice in the literature and has appealing properties, such as being integrally strictly positive definite and in the Stein class of every continuously differentiable density with support $\R^d$. The choice of the bandwidth $l$ is up to the user and a common default choice is to use the so-called median heuristic $2l^2=\text{Median}\lc\lc\Vert X_i -X_j\Vert^2\rc_{i,j\in [n],i\neq j}\rc$. However, the median heuristic makes $l$ a data-dependent estimator. This could potentially alter the limiting law of $n\hksd^2_q(p_{\thetanhat})$, similarly to what we have seen in the previous sections. Therefore, we cannot simply apply the median heuristic here and instead have to use a deterministic bandwidth. One can show that in all our experiments $\text{Median}\lc\lc\Vert X_i -X_j\Vert^2\rc_{i,j\in [n],i\neq j}\rc=O(d)$, which implies that a reasonable choice for $l$ is $c\sqrt{d}$, where $c$ is a fixed tuning constant. To determine the tuning constant we mimicked the procedure of \cite{Key_2023} to chose $c$ from the grid $\{0.15,0.2,0.3,0.5,1.0\}$ by minimizing the deviation from the level, resp.\ maximizing the power of the test, on a separate artificial dataset.

\subsection{Monte Carlo study under the null hypothesis}
To the best of our knowledge, no existing KSD-based test is capable of handling composite hypotheses while asymptotically controlling the test level under the null. Nevertheless, to assess the finite sample performance, we compare our test with the wild bootstrap-based KSD test of \cite{Key_2023}, which is shown to not keep its level in Appendix \ref{app:inconswildboot}.

To assess the performance of the two tests under the null, let $X\in\R^d$ denote a $d$-dimensional multivariate standard normal random vector and set
\[
Q \;=\; \mathcal N(0,I_d), 
\qquad X_1,\dots,X_n \stackrel{\text{i.i.d.}}{\sim} Q.
\]
The family of models $\lc P_\theta\rc_{\theta\in\Theta}$ is the $d$-variate Gaussian family with unknown mean and covariance and the optimal parameter is given by $\theta_0=(0,I_d)$, which is estimated via the empirical mean and covariance matrix. We conducted $500$ independent Monte Carlo replications and set the number of bootstrap replications for both our bootstrap and the wild bootstrap to $200$. Figure \ref{fig:nullhyp} shows the empirical rejection probabilities for an asymptotic level of $5\%$ for dimensions $d\in\{1,4\}$ and sample sizes $n\in\{200,300,400,500,600\}$, where additional illustrations can be found in Figure \ref{fig:nullhypapp}.

\begin{figure}[h]
        \centering
    \begin{subfigure}{0.49\linewidth}
        \centering
         \includegraphics[width=\linewidth]{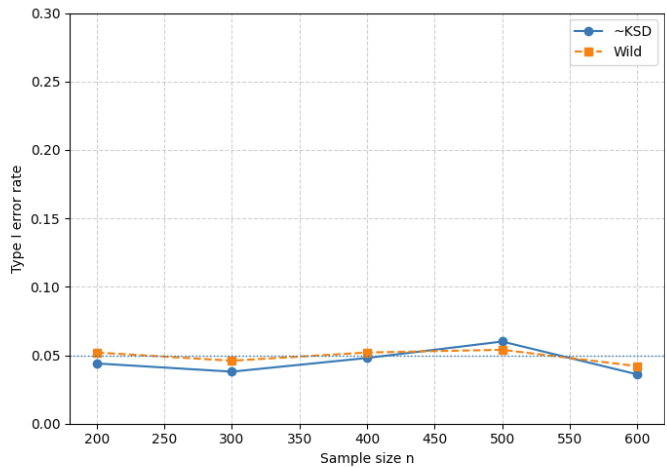} 
    \end{subfigure}
    \hfill
    \begin{subfigure}{0.5\linewidth}  
    \centering
    \includegraphics[width=\linewidth]{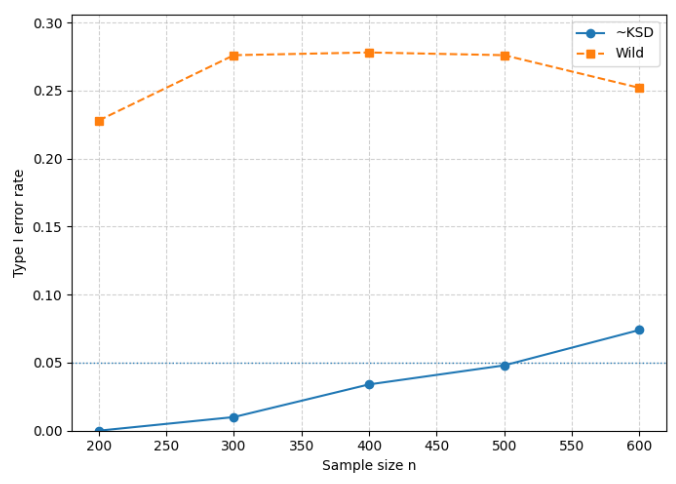}
    \end{subfigure}
    \caption{Simulation under the null hypothesis for dimension $d=1$ (left) and dimension $d=4$ (right) with $c=0.2$. KSD denotes the test proposed in this paper whereas Wild denotes the KSD test from \cite{Key_2023}.}
    \label{fig:nullhyp}
\end{figure}

One can see that in dimension $d=1$, both tests keep their level equally well. However, for dimension $d=4$, we can see that the wild bootstrap based test from \cite{Key_2023} does not keep its level, whereas our proposed test keeps its level reasonably well, in line with the theoretical findings from the previous sections.

\subsection{Monte Carlo study under the alternative hypothesis}

Under the alternative hypothesis, we again benchmark our test against the wild bootstrap-based KSD test from \cite{Key_2023} as well as the MMD-based composite tests proposed in \cite{brueckminfermanian2024} and 
\cite{Key_2023}. Our framework to assess the test under the alternative is as follows: We draw $n$ samples from a symmetric two-component Gaussian mixture,
\[
Q_\mu \;=\; \tfrac12\,\mathcal N(e_1\mu, I_d) \,+\, \tfrac12\,\mathcal N(-e_1\mu,I_d),
\]
with mixing weights $1/2$ and separation parameter $\mu\ge0$, where $e_1$ denotes the first unit vector $(1,0,\ldots, 0)\in\R^d$.
Note that $Q_\mu\not\in \lc P_\theta\rc_{\theta\in \Theta}$ for $\mu>0$, hence $H_1:\;Q_\mu\neq P_\theta\ \forall\theta\in\Theta$ holds.
We conducted $300$ independent Monte Carlo replications and set the number of bootstrap replications $B$ to $200$.
Figure \ref{fig:alternative} reports the power of the tests for varying dimensions $d\in\{1,2,4\}$, sample sizes $n\in\{100,200,300,400,500\}$ and $\mu\in\{1,2\}$. Additional illustrations can be found in Figure \ref{fig:alternapp}.

\begin{figure}[h]
    \begin{subfigure}{0.43\linewidth}
            \hspace{-0.5cm}
        \centering
    \includegraphics[width=\linewidth]{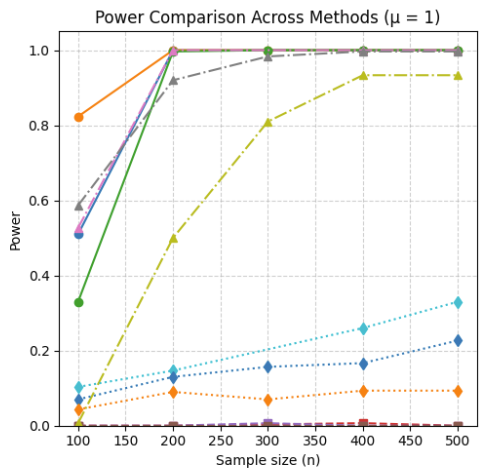} 
    \end{subfigure}
    \hspace{-0.35cm}
    \begin{subfigure}{0.49\linewidth}  
    \centering
    \includegraphics[width=1.20\linewidth]{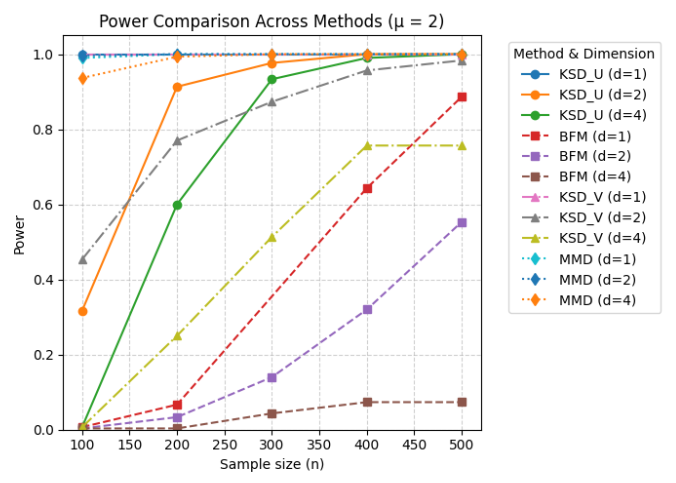}
    \end{subfigure}
    \caption{Empirical rejection probabilities under the alternative for dimensions $d\in\{1,2,4\}$, $\mu=1$ (left) and $\mu=2$ (right) with $c=1$. KSD$_U$ denotes the test proposed in this paper, KSD$_V$ (resp.\ MMD) denotes the wild bootstrapped KSD (resp.\ MMD) tests from \cite{Key_2023} and BFM denotes the MMD test from \cite{brueckminfermanian2024}}.
    \label{fig:alternative}
\end{figure}

Figure \ref{fig:alternative} shows that all MMD based tests have a significantly less power than the KSD based tests, independently of the dimension and separation parameter. Furthermore, the KSD based test from this paper outperforms the KSD test of \cite{Key_2023} for this example. Moreover, the power of the tests decreases with increasing dimension, which is due to the fact that a difference in only one component of a $d$-dimensional random vector is harder to uncover with increasing dimension. 

Altogether, the findings of the simulation study suggest that the KSD-based test proposed in this paper may have superior finite sample performance over the KSD-based test of \cite{Key_2023}. Moreover, it seems that both KSD-based tests have superior power compared to the MMD-based tests of \cite{brueckminfermanian2024} and \cite{Key_2023}. This is not surprising, as the MMD-based tests do not incorporate any information about the underlying density of the fitted family of distributions, in contrast to the KSD. Generalizing these simulation results to other scenarios would require more comprehensive analysis, which, however, lies beyond the scope of this paper.

\bibliography{bibliography.bib}

\section*{Acknowledgments}
The authors would like to express their deepest gratitude to Aleksey Min. Large parts of this project were conducted under his master thesis supervision of Veronika Reimoser at TUM. Moreover, we would like to thank the whole Chair of Mathematical Finance at TUM for allowing us to use their computational resources.
 
\section*{Funding}
This work was supported by the Swiss National Science Foundation under Grant 186858.

\appendix
\section{Proofs}

\subsection{Technical results}

\begin{Lemma}
\label{lemma:momentsofh}
  Assumption \ref{KSD assumpt. 5} and \ref{KSD assumpt. 3} imply 
  \begin{enumerate}
    \item[(i)] $\e\lk \sup_{\theta\in\Theta}\Vert h_\theta(X,X')  \Vert_2^2 \rk<\infty$ and $\e\lk \sup_{\theta\in\Theta}\Vert h_\theta(X,X)  \Vert_2^2 \rk<\infty$,
    \item[(ii)] $\e\lk \sup_{\theta\in\Theta}\Vert \nabla_\theta h_\theta(X,X')  \Vert_2^2 \rk<\infty$ and $\e\lk \sup_{\theta\in\Theta}\Vert \nabla_\theta h_\theta(X,X)  \Vert_2^2 \rk<\infty$,
    \item[(iii)] $\e\lk \sup_{\theta\in\Theta}\Vert \nabla^m_\theta h_\theta(X,X')  \Vert_1 \rk<\infty$ and $\e\lk \sup_{\theta\in\Theta}\Vert \nabla^m_\theta h_\theta(X,X)  \Vert_1 \rk<\infty$ for all $2\leq m\leq 3$.
  \end{enumerate} 
Further, for all $\theta\in\Theta$ we have $\e\lk  \nabla^m_\theta h_{\theta}(X,X')\rk=\nabla^m_\theta\e\lk   h_{\theta}(X,X')\rk$ for all $1\leq m\leq 3$.
\end{Lemma}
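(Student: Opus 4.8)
The plan is to bound each quantity by a finite sum of products of the kernel $k$ (and its derivatives) and the score function $s_{p_\theta}$ (and its $\theta$-derivatives), then apply Assumption \ref{KSD assumpt. 5} (boundedness of $k$ and its first-order $x$-derivatives) together with the moment bounds in Assumption \ref{KSD assumpt. 3}(3)--(4) via the Cauchy--Schwarz inequality. First I would recall the explicit form of the core $h_\theta$ from \eqref{kernelksd}: it is a sum of four terms, each of which is a bilinear (in fact multilinear) expression in $k(x,x')$, $\nabla_x k(x,x')$, $\nabla_{x'}k(x,x')$, $\partial^2_{x_i x'_i}k(x,x')$ on the one hand and $s_{p_\theta}(x)$, $s_{p_\theta}(x')$ on the other. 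Since $k$ has bounded zeroth, first, and (for the Gaussian example, but here we only need what Assumption \ref{KSD assumpt. 5} guarantees — note the last term $\sum_i \partial^2_{x_i x'_i}k$ requires bounded mixed second derivatives, which should be folded into Assumption \ref{KSD assumpt. 5} or handled as in the Gaussian case) derivatives, every kernel factor is bounded by a constant, uniformly in $x,x'$. Hence, schematically,
\[
\Vert h_\theta(x,x')\Vert \lesssim \Vert s_{p_\theta}(x)\Vert_1 \Vert s_{p_\theta}(x')\Vert_1 + \Vert s_{p_\theta}(x)\Vert_1 + \Vert s_{p_\theta}(x')\Vert_1 + 1,
\]
and taking $\sup_{\theta\in\Theta}$ inside, then squaring and taking expectations, the cross term is controlled by $\e[\sup_\theta \Vert s_{p_\theta}(X)\Vert_1^4]^{1/2}\e[\sup_\theta\Vert s_{p_\theta}(X')\Vert_1^4]^{1/2}<\infty$ by Assumption \ref{KSD assumpt. 3}(3) with $m=0$, and independence of $X,X'$; the remaining terms are controlled by the same assumption with lower powers. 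This gives (i); the diagonal case $h_\theta(X,X)$ is identical, again using only $\e[\sup_\theta\Vert s_{p_\theta}(X)\Vert_1^4]<\infty$ and that $k$ and its derivatives evaluated at $(x,x)$ are still bounded.

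For (ii) and (iii), I would differentiate \eqref{kernelksd} in $\theta$ under the (legitimate) assumption that $k$ does not depend on $\theta$, so $\nabla^m_\theta h_\theta$ is obtained by Leibniz's rule applied only to the score factors: $\nabla^m_\theta h_\theta$ is a sum of terms of the form (bounded kernel factor) $\times$ ($\nabla^a_\theta s_{p_\theta}(x)$) $\times$ ($\nabla^b_\theta s_{p_\theta}(x')$) with $a+b=m$, plus terms with a single score factor $\nabla^m_\theta s_{p_\theta}(x)$ coming from the cross terms in $h_\theta$. For $m=1$ the relevant moments are $\e[\sup_\theta\Vert\nabla^m s_{p_\theta}\Vert_1^4]<\infty$, $m\in\{0,1\}$ (Assumption \ref{KSD assumpt. 3}(3)), which by Cauchy--Schwarz give the $L^2$ bound in (ii); for $m\in\{2,3\}$ the products are of the form $\nabla^a_\theta s_{p_\theta}(x)\,\nabla^b_\theta s_{p_\theta}(x')$ with $a+b\le 3$ and $a,b\le 3$, and one term $\nabla^m_\theta s_{p_\theta}$; here a single application of Cauchy--Schwarz pairs a factor with $m$-index in $\{2,3\}$ (second moment finite by Assumption \ref{KSD assumpt. 3}(4)) against a factor with index in $\{0,1\}$ (fourth, hence second, moment finite by (3)), yielding the $L^1$ bound in (iii). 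One has to keep careful track that in every product at most one factor carries a "high" derivative order ($2$ or $3$), which is automatic since $a+b\le m\le 3$ forces the other factor to have order $\le 1$; this combinatorial bookkeeping is the only mildly delicate point.

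Finally, the interchange $\e[\nabla^m_\theta h_\theta(X,X')]=\nabla^m_\theta\e[h_\theta(X,X')]$ for $1\le m\le 3$ follows from the standard differentiation-under-the-integral-sign theorem: by Assumption \ref{KSD assumpt. 3}(2), $\theta\mapsto h_\theta(x,x')\in\mathcal C^4(\Theta)$ for every fixed $(x,x')$ (the kernel factors being $\theta$-independent and $C^\infty$), and the dominating integrable envelopes $\sup_{\theta\in\Theta}\Vert\nabla^m_\theta h_\theta(X,X')\Vert$, $1\le m\le 3$, are the ones just shown to be integrable in (ii)--(iii). Applying this inductively in $m$ (differentiate once, invoke the envelope for $\nabla^{m+1}$, repeat) over the interior of the convex set $\Theta$ gives the claim. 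The main obstacle is simply organizing the Leibniz expansion of $\nabla^m_\theta h_\theta$ cleanly enough that the pairing of moment assumptions in the Cauchy--Schwarz step is transparent; there is no conceptual difficulty, only careful accounting, and a mild gap to flag is that the last summand of \eqref{kernelksd} needs boundedness of the mixed second-order $x$-derivatives of $k$, which should be added to Assumption \ref{KSD assumpt. 5} (as the remark about the Gaussian kernel implicitly acknowledges).
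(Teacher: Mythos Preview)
Your proposal is correct and follows essentially the same route as the paper's proof: both arguments exploit boundedness of the kernel factors to reduce everything to moment bounds on products of $\nabla_\theta^a s_{p_\theta}(X)$ and $\nabla_\theta^b s_{p_\theta}(X')$, then apply Cauchy--Schwarz together with Assumption~\ref{KSD assumpt. 3}(3)--(4), and finally deduce the interchange of expectation and differentiation from the integrable envelopes $\sup_{\theta\in\Theta}\Vert\nabla_\theta^m h_\theta\Vert$ just established. Your observation that the last summand of \eqref{kernelksd} requires bounded mixed second-order $x$-derivatives of $k$, not just the first-order bound stated in Assumption~\ref{KSD assumpt. 5}, is a fair point; the paper's proof simply asserts ``$k$ and its partial derivatives are bounded'' and proceeds, so you have in fact been slightly more careful here.
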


\begin{proof}
    Note that the last statement of the lemma follows from $(i)-(iii)$ by an application of the Leibniz rule with the majorant $\sup_{\theta\in\Theta}\nabla^m_\theta h_\theta(X,X')$. Thus, it remains to show $(i)-(iii)$. Since $k$ and its partial derivatives are bounded we can forget about their influence when showing finiteness of expectations.

    From Assumption \ref{KSD assumpt. 3}, we can deduce that
    \begin{align*}
        &\e\lk \sup_{\theta\in\Theta} \lv s_{p_\theta,i}(X)\rv^a \sup_{\theta\in\Theta} \lv s_{p_\theta,j}(X)\rv^v  \rk \\
        &\leq \e\lk \sup_{\theta\in\Theta} \lv s_{p_\theta,i}(X)\rv^{2a} \rk^{1/2} \e\lk \sup_{\theta\in\Theta} \lv s_{p_\theta,j}(X)\rv^{2b}  \rk^{1/2}<\infty
    \end{align*} 
    for all $a,b\in\{0,1,2\}$, and similarly, for $\e\lk \sup_{\theta\in\Theta}  \lv s_{p_\theta,j}(X)\rv \sup_{\theta\in\Theta} \lv s_{p_\theta,i}(X)\rv  \rk $.  Therefore, since $\e\lk h(X,X')\rk$ and $\e\lk h(X,X)\rk$ can be estimated by such terms, $(i)$ is satisfied.

    Next we show $(ii)$. First, observe that $ \partial_{\theta_j} h_\theta(X,X) $ is given by 
    \begin{align}
        &k(X,X) \sum_{1\leq i\leq p} 2  s_{p_\theta,i}(X) \partial_{\theta_j} s_{p_\theta,i}(X)+ \nabla_{x} k(X,X)^T\partial_{\theta_j} s_{p_\theta}(X) \nonumber\\
        &+\nabla_{x'} k(X,X)^T\partial_{\theta_j} s_{p_\theta}(X). \label{eqnparder}
    \end{align}

    Now, let us show $(ii)$. When considering $\Vert \nabla_\theta h_\theta(X,X)  \Vert_2^2$, we only need to show the finiteness of expectations of squares of terms appearing in (\ref{eqnparder}). For this purpose it is enough to observe that
    \begin{align*}
         &\e\lk \sup_{\theta\in\Theta} \lv s_{p_\theta,i}(X)^{a} \partial_{\theta_j} s_{p_\theta,i}(X)^{b}   \rv\rk\\
         &\leq \e\lk \sup_{\theta\in\Theta} \lv s_{p_\theta,i}(X)^{2a}   \rv\rk^{1/2} \e\lk \sup_{\theta\in\Theta} \lv \partial_{\theta_j} s_{p_\theta,i}(X)^{2b}   \rv\rk^{/1/2} <\infty
    \end{align*}
    for $a,b\in\{0,1,2\}$.
    Thus, $(ii)$ follows, as the argument for $\Vert \nabla_\theta h_\theta(X,X')  \Vert_2^2$ is similar, but fewer moments are needed due to the independence of $X,X'$.

    It remains to show $(iii)$. Observe that each element in $ \nabla^m_\theta h_\theta(X,X') $ is of the form 
    \begin{align}
       &k(X,X^\prime) \sum_{1\leq i\leq p}\sum_{0\leq j\leq m}\partial^{j}_\theta s_{p_\theta,i}(X)\partial^{m-j}_\theta s_{p_\theta,i}(X') \nonumber\\
       &+ \nabla_{x} k(X,X')^T \partial^{m}_\theta s_{p_\theta}(X)+\nabla_{x^\prime} k(X,X')^T \partial^{m}_\theta s_{p_\theta}(X') \label{eqnpardermix} 
    \end{align}
    and each element in $ \nabla^m_\theta h_\theta(X,X) $ of the form 
    \begin{align*}
       &k(X,X) \sum_{1\leq i\leq p}  \sum_{1\leq j\leq m} 2\partial_\theta^{m-j} s_{p_\theta,i}(X) \partial^j_{\theta} s_{p_\theta,i}(X)\\
       &+ \nabla_{x} k(X,X)^T\partial^m_{\theta} s_{p_\theta}(X)+\nabla_{x'} k(X,X)^T\partial^m_{\theta} s_{p_\theta}(X),
    \end{align*}
    where $0\leq j\leq m$, $\partial_\theta^j$ is an abstract notation for $\frac{\partial^j}{\partial \theta_{i_1}\ldots\partial \theta_{i_j}}$ for some $(i_l)_{1\leq l\leq j}\in[p]$, and with the convention $\partial^0 f(\theta)=f(\theta)$.    
    Therefore, we get that $\e\lk \sup_{\theta\in\Theta}\Vert \nabla^m_\theta h_\theta(X,X')  \Vert_1 \rk<\infty$ and  $\e\lk \sup_{\theta\in\Theta}\Vert \nabla^m_\theta h_\theta(X,X)  \Vert_1 \rk<\infty$ whenever
    \begin{align*}
     &\e\lk \sup_{\theta\in\Theta}\lv   \partial^{j}_\theta s_{p_\theta,i}(X)\partial^{m-j}_\theta s_{p_{\theta},i}(X') \rv\rk \\
     &\leq \e\lk \sup_{\theta\in\Theta}\lv\partial^{j}_\theta s_{p_{\theta},i}(X) \rv \rk
      \e\lk \sup_{\theta\in\Theta}\lv\partial^{m-j}_\theta s_{p_{\theta},i}(X') \rv \rk<\infty
    \end{align*}
    and
    \begin{align*}
     &\e\lk \sup_{\theta\in\Theta}\lv   \partial^{j}_\theta s_{p_{\theta},i}(X)\partial^{m-j}_\theta s_{p_{\theta},i}(X) \rv\rk \\
     &\leq \e\lk \sup_{\theta\in\Theta}\lv\partial^{j}_\theta s_{p_{\theta},i}(X) \rv^2 \rk^{1/2} 
      \e\lk \sup_{\theta\in\Theta}\lv\partial^{m-j}_\theta s_{p_{\theta},i}(X') \rv^2 \rk^{1/2} <\infty,
    \end{align*}
    which is satisfied by Assumption \ref{KSD assumpt. 3}.
    Thus, $\e\lk \sup_{\theta\in\Theta}\Vert \nabla^m_\theta h_\theta(X,X')  \Vert_1 \rk<\infty$. 
\end{proof}

Let $H:=(H_{h,l})_{1\leq h,l\leq p}$ be defined via the maps $H_{h,l}:\Theta \times \XX \times \XX\to \RR$ as
\begin{equation}\label{def.: H map definition}
H_{h,l}(\theta, x,x')
= \frac{\partial^2}{\partial \theta_h \partial\theta_l} 
h_\theta (x,x').
\end{equation}

\begin{cor}\label{cor:asymptist}
Assume that $(X_i)_{i\in\NN}\overset{i.i.d.}{\sim } Q$. Under $H_0^C$ and the conditions of Lemma \ref{lemma:momentsofh} we have
$n \hksd^2_q(p_\thetanull)\xrightarrow{d}Z_1$ and $\sqrt{n}\frac{1}{n(n-1)} \sum_{i,j\in[n], i\neq j}\nabla_{\theta} h_\thetanull(X_i,X_j) \xrightarrow{d} Z_2$, where $Z_1\sim \sum_{j=1}^{\infty} \lambda_j (T_j^2 - 1)$ with $(T_j)_{j\in\NN}\overset{i.i.d.}{\sim}\mathcal{N}(0,1)$ and the $\lambda_j$ are the eigenvalues of the operator $ \psi\mapsto \EE_{X\sim Q}[h_\thetanull(x,X) \psi(X)]$ as well as $Z_2\sim \mathcal{N}(0,4\Var\lc \e_X\lk \nabla_{\theta} h_{\theta_0}(X,X^\prime) \rk\rc$. Moreover $\frac{1}{n(n-1)}\sum_{i,j\in[n], i\neq j}H_{h,l}(\thetanull, X_i,X_j)\to H^\star_{h,l} $ for all $h,l\in[p]$.
\end{cor}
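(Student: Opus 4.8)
\textbf{Proof plan for Corollary \ref{cor:asymptist}.}
The plan is to treat the three assertions as instances of classical $U$-statistic limit theory, using Lemma \ref{lemma:momentsofh} as the integrability input. First I would verify the hypotheses: under $H_0^C$ we have $\e[h_{\theta_0}(X,X')]=\ksd^2_q(p_{\theta_0})=0$, and since $s_{p_{\theta_0}}=s_q$ the conditional mean $\e[h_{\theta_0}(x,X)]$ vanishes for $Q$-a.e.\ $x$ (this is the standard degeneracy of the Stein kernel at the true model, which follows from the integration-by-parts / Stein identity built into the Stein class assumption). Lemma \ref{lemma:momentsofh}(i) gives $\e[h_{\theta_0}(X,X')^2]<\infty$, so $h_{\theta_0}$ is a square-integrable, completely degenerate, symmetric kernel. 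The classical spectral theorem for such $U$-statistics (see \cite{Serfling_1980}, or the original Gregory/Serfling result) then yields $n\hksd^2_q(p_{\theta_0}) = n\,U_n h_{\theta_0}\xrightarrow{d}\sum_{j=1}^\infty \lambda_j(T_j^2-1)$ with $(T_j)$ i.i.d.\ $\mathcal N(0,1)$ and $\lambda_j$ the eigenvalues of the Hilbert–Schmidt operator $\psi\mapsto \e_{X\sim Q}[h_{\theta_0}(\cdot,X)\psi(X)]$; the $-1$ centering arises because $h_{\theta_0}$ is degenerate (it replaces the $T_j^2$ by $T_j^2-1$, equivalently the constant $\sum\lambda_j = \e[h_{\theta_0}(X,X)]$ is subtracted). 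The operator is Hilbert–Schmidt by $\e[h_{\theta_0}(X,X')^2]<\infty$, so the eigenvalue expansion converges in the required sense.

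For the second assertion, I would apply the standard CLT for non-degenerate $U$-statistics to the $\R^p$-valued kernel $g(x,x'):=\nabla_\theta h_{\theta_0}(x,x')$, symmetrized if necessary (or one simply works with the asymmetric $U$-statistic, whose Hájek projection has the same first-order behaviour). Lemma \ref{lemma:momentsofh}(ii) provides $\e\Vert g(X,X')\Vert_2^2<\infty$, so by Hoeffding's decomposition $\sqrt{n}\big(U_n g - \e[g]\big)$ has the same limit as $2\sqrt{n}$ times the empirical mean of the projection $x\mapsto \e_{X'}[g(x,X')]-\e[g]$, hence converges to $\mathcal N\big(0,\,4\Var(\e_X[\nabla_\theta h_{\theta_0}(X,X')])\big)=:Z_2$ by the multivariate CLT. (Here $\e[g]=\nabla_\theta\e[h_{\theta_0}]$ by the last statement of Lemma \ref{lemma:momentsofh}; note that under $H_0^C$ this first-order term need not vanish, which is precisely why $\nabla_\theta h_{\theta_0}$ is non-degenerate while $h_{\theta_0}$ is degenerate.) The third assertion, $U_n H_{h,l}(\theta_0,\cdot,\cdot)\to H^\star_{h,l}$, is just the strong (or weak) law of large numbers for $U$-statistics applied to the kernel $H_{h,l}(\theta_0,\cdot,\cdot)$, whose expectation is finite by Lemma \ref{lemma:momentsofh}(iii) with $m=2$ and equals $\partial^2_{\theta_h\theta_l}\ksd^2(P_{\theta_0},Q)=H^\star_{h,l}$ by the interchange-of-differentiation-and-expectation statement in Lemma \ref{lemma:momentsofh}.

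The main obstacle — really the only non-mechanical point — is establishing the complete degeneracy of $h_{\theta_0}$ under $H_0^C$, i.e.\ that $\e_{X\sim Q}[h_{\theta_0}(x,X)]=0$ for a.e.\ $x$, rather than merely that $\e_{X,X'}[h_{\theta_0}(X,X')]=0$. This is where the Stein-class hypothesis on $k$ does the work: writing $h_{\theta_0}$ in the form \eqref{kernelksd} with $s_{p_{\theta_0}}=s_q$ and integrating against $q(x')\,dx'$, the sum of the four terms telescopes into $\int_\XX \nabla_{x'}\big(s_q(x)^\intercal k(x,x')q(x')\big)\,dx' + \int_\XX \nabla_{x'}\cdot\big(\nabla_{x'}k(x,x')\, q(x')\big)\,dx'$-type expressions, each of which is $\bm 0$ because $k(x,\cdot)$ and its derivatives lie in the Stein class of $q$. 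Once this identity is in hand, everything else is a citation to \cite{Serfling_1980}. I would also note in passing that joint convergence of $(n U_n h_{\theta_0},\sqrt n\, U_n\nabla_\theta h_{\theta_0})$ — needed for Assumption \ref{KSD assumpt. 6} — follows from the same Hoeffding/spectral machinery applied to the stacked kernel, since the degenerate part and the linear part are automatically orthogonal in the Hoeffding decomposition.
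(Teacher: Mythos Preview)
Your approach is exactly the paper's: the paper's proof is the single sentence ``Follows from standard $U$-statistics theory since the respective moment conditions are satisfied by Lemma \ref{lemma:momentsofh}'', and you have spelled out precisely that argument --- degeneracy of $h_{\theta_0}$ via the Stein identity, the spectral limit theorem for degenerate kernels, Hoeffding/CLT for $\nabla_\theta h_{\theta_0}$, and the $U$-statistic LLN for $H_{h,l}$.

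One small correction to your parenthetical remark: under $H_0^C$ the mean $\e[g]=\nabla_\theta\ksd_q^2(p_\theta)\big|_{\theta_0}$ \emph{does} vanish, because $\theta_0$ is an interior global minimum of $\theta\mapsto\ksd_q^2(p_\theta)$ (the value there is $0$); this is exactly what is needed to pass from your centred statement $\sqrt n(U_n g-\e[g])\to Z_2$ to the uncentred statement $\sqrt n\,U_n g\to Z_2$ that the corollary actually asserts. The non-degeneracy of $\nabla_\theta h_{\theta_0}$ has nothing to do with $\e[g]\neq 0$; it is the non-constancy of the first Hoeffding projection $x\mapsto\e_{X'}[\nabla_\theta h_{\theta_0}(x,X')]$ that makes the kernel non-degenerate. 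With that clarification your argument is complete.
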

\begin{proof}
    Follows from standard $U$-statistics theory since the respective moment conditions are satisfied by Lemma \ref{lemma:momentsofh}.
\end{proof}

\subsection{Proof of Theorem \ref{thm.: Convergence of KSD under H0}}

By Assumption \ref{KSD assumpt. 3}, for all $x\in\XX$ we have that $p_\theta(X)>0$ almost surely for $X\sim Q$ and that $p_\theta(x)\in\mathcal{C}^4(\theta)$, so $h_\theta(x,x') \in \mathcal{C}^3(\theta)$ for any $x,x'\in\XX$. We are therefore able to perform a second order Taylor expansion around $\theta_0$, that yields
\begin{align*}
&n \hksd^2_q(p_\thetanhat) \\
&= n \hksd^2_q(p_\thetanull) + \sqrt{n} (\thetanhat - \theta_0)^T \cdot \sqrt{n} \  \frac{1}{n(n-1)} \sum_{i,j\in[n], i\neq j}\nabla_{\theta} h_\thetanull(X_i,X_j)  \\
&+ \sqrt{n} (\thetanhat - \theta_0)^T
\cdot \bigg( \frac{1}{n(n-1)} \sum_{i,j\in[n], i\neq j}H(\thetanull, X_i,X_j) \bigg)
\cdot \sqrt{n} (\thetanhat - \theta_0)  + R(\thetanhat), 
\end{align*}
where $R(\thetanhat)$ denotes the random remainder term and $H$ is defined in (\ref{def.: H map definition}). Note that by Lemma \ref{lemma:momentsofh} we have $\e\lk \nabla_{\theta} h_\thetanull(X,X')\rk=\nabla_{\theta}\e\lk  h_\thetanull(X,X')\rk=\nabla_{\theta}\ksd_q(p_\thetanull)=0$ as $\theta_0$ is the unique minimizer of $\theta\mapsto \ksd^2_q(p_{\theta})$ by Assumption \ref{KSD assumpt. 3}. Moreover, $\e\lk H(\thetanull, X_i,X_j)\rk=\nabla_\theta^2\e\lk h_\thetanull( X,X')\rk=H^*$. Thus, the result immediately follows from Assumption \ref{KSD assumpt. 6} if we can show  that $R(\thetanhat)=o_{\PP}(1)$.
By Taylor's theorem, the remainder term is of the form
\begin{align*}
R(\thetanhat)&=n\sum_{i,j,h\in[p]} R_{i,j,h}(\thetanhat) \prod_{l\in\{i,j,h\}}(\hat{\theta}_{n,l}-\theta_{o,l}) \quad \text{with } 
\lVert R_{i,j,h}(\thetanhat)\rVert \\
&\leq 
 \frac{1}{3!} \sup_{\theta\in\Theta} \left\lVert \nabla_\theta^3 \hksd^2_q(p_{\theta}) \right\rVert_\infty. 
 \end{align*}
For $n\in\NN$, we can upper bound 
$$\sup_{\theta\in\Theta} \left\lVert \nabla_\theta^3 \hksd_q^2(p_\theta) \right\rVert_\infty
\leq 
\frac{1}{n(n-1)}\sum_{i,j\in[n], i\neq j} 
\sup_{\theta\in\Theta} \left\lVert \nabla_\theta^3 h_\theta(X_i,X_j) \right\rVert_\infty.
$$
The r.h.s. of this is a $U$-statistic. Since $\EE_{X,X'\sim Q}\big[\sup_{\theta\in\Theta} \left\lVert \nabla_\theta^3 h_\theta(X,X') \right\rVert_1 \big]<\infty$, standard $U$-statistic results yield that the $U$-statistic converges in distribution to its expectation, which is finite.
Thus, $\sup_{\theta\in\Theta} \lVert \nabla_\theta^3 \hksd^2_q(p_\theta) \rVert_\infty$ is bounded in probability, yielding 
$$R(\thetanhat)=O_{\PP}(n\lVert \thetanhat-\thetanull\rVert^3)=O_{\PP}(\lVert \thetanhat-\thetanull\rVert)=o_{\PP}(1)
$$
and the result follows.

\subsection{Proof of Theorem \ref{thm.: KSD Consistency under H_1^C}}

First, recall that by our assumptions $\theta\mapsto \ksd_q(p_\theta)$ is continuous. Due to the compactness of $\Theta$ we get that $H_1^C$ implies $\ksd_q(p_\thetanull)>0$. 
We prove the theorem by contradiction and assume there exists an event $A$ with $\PP(A)>0$ such that $\liminf_{n\to\infty} \hksd^2_q(p_\thetanhat)=0$ on $A$.
This means that there exists a collection of indices $(a_n)_{n\geq 1}$ such that the subsequence $(\theta_{a_n})_{n\geq 1}$ satisfies
\begin{equation*}
\lim_{n \to \infty} \hksd_q^2(p_{\hat{\theta}_{a_n}})=0 \quad \text{on $A$.} 
\end{equation*}

Additionally, since $\Theta$ is compact, the Bolzano-Weierstrass theorem implies that the sequence $(\hat{\theta}_{a_n})_{n\in\NN}$ has a subsequence $(\hat{\theta}_{b_n})_{n\in\NN}$ that converges towards a $\theta^*\in\Theta$. By the mean value theorem,
\begin{align*}
  &\lv  \hksd_q^2(p_{\hat{\theta}_{b_n}})-\hksd_q^2(p_{\theta^*})\rv\\
  &\leq \Vert \hat{\theta}_{b_n}-\theta^*\Vert_1 \frac{1}{n(n-1)}\sum_{i,j\in[n],i\neq j}\sup_{\theta\in\Theta}\Vert \nabla_{\theta}h(X_i,X_j) \Vert_1.
\end{align*}
Since $\e\lk \sup_{\theta\in\Theta}\Vert \nabla_{\theta}h(X_i,X_j) \Vert_1 \rk<\infty$ by Lemma, \ref{lemma:momentsofh} we get $\lv  \hksd_q^2(p_{\hat{\theta}_{b_n}})-\hksd_q^2(p_{\theta^*})\rv\to 0$, implying that $\ksd_q(p_{\theta^\star})=\lim_{n\to\infty} \hksd_q^2(p_{\theta^*})=\lim_{n\to\infty} \hksd_q^2(p_{\hat{\theta}_{b_n}})=0$, a contradiction to $H_1^C$.

\subsection{Proof of Lemma \ref{lem:jointconvbootstrapUstat}}
 Lemma \ref{lemma:momentsofh} immediately implies that the conditions for an application of \cite[Theorem 2.4 and Corollary 2.6]{arconesgine1992} are satisfied. As a consequence, we obtain that $nU_n^\star h_{\theta_0,n}\to Z_1$ and $\sqrt{n}\lc U_n^\star \nabla h_{\theta_0}-U_n \nabla h_{\theta_0}\rc\to Z_2$ (see \cite[Remark 2.7]{arconesgine1992}). Joint convergence immediately follows from \cite[Remark 2.10 ii]{arconesgine1992}.

\subsection{Proof of  Proposition \ref{prop:naivebootfails}}
    Apply a third-order Taylor expansion to $U_n^\star h_{\thetanhat^\star,n}$ to get
 \begin{align*}
     U_n^\star h_{\tns,n}&=U_n^\star h_{\too,n}+(\tns-\too)^T U_n^\star \nabla_\theta h_{\too,n}\\
     &+(\tns-\too)^T\lc U_n^\star \nabla^2_\theta h_{\too,n} \rc(\tns-\too)+R_n(\tns).
 \end{align*}
 Note that by \cite[Theorem 2.4 a) and c)]{arconesgine1992}, $U_n^\star \nabla_\theta h_{\too,n}$ is the bootstrapped version of a $U$-statistic with degenerate core
\begin{align}
     &\nabla_\theta h_{\too} -\EE_{X\sim Q}\lk \nabla_\theta h_{\too}(X,\cdot)\rk-\EE_{X\sim Q}\lk \nabla_\theta h_{\too}(\cdot,X)\rk  \nonumber\\
     &+\EE_{X,X^\prime \sim Q}\lk \nabla_\theta h_{\too}(X,X^\prime)\rk \label{eqndegcore}
\end{align} 
and the same is true for $U_n^\star \nabla^2_\theta h_{\too,n}$.
Thus, they are $O^\star(n^{-1})$. Therefore, $n U_n^\star h_{\tns,n}=nU_n^\star h_{\too,n}+O^\star(n^{-1/2}) +nR_n(\tns)\overset{\star}{\to} Z_1$ if $R_n(\tns)=o^\star(n^{-1})$. To see that $R_n(\tns)=o^\star(n^{-1})$, observe that $R_n(\tns)$ is a sum of terms of the form
$$ \tau:=(\tns-\too)_i(\tns-\too)_j(\tns-\too)_k U^\star_n\lc \frac{\partial^3}{\partial\theta_i\partial\theta_j\partial\theta_k}h_{\Tilde{\theta},n}\rc, $$
where $\Tilde{\theta}$ is in the line segment joining $\tns$ and $\theta_0$. Since one can check that Assumption \ref{Ass:bootstrap} implies that
$$\EE\lk \lv \frac{\partial^3}{\partial\theta_i\partial\theta_j\partial\theta_k}h_{\Tilde{\theta},n}(X,X^\prime)\rv \rk\leq \EE\lk \sup_{\theta\in\Theta} \lv \frac{\partial^3}{\partial\theta_i\partial\theta_j\partial\theta_k}h_{\Tilde{\theta},n}(X,X^\prime)\rv\rk<\infty$$
and one similarly obtains $\EE\lk \lv \frac{\partial^3}{\partial\theta_i\partial\theta_j\partial\theta_k}h_{\Tilde{\theta},n}(X,X)\rv \rk<\infty$
the bootstrap law of large numbers \cite[Theorem 4.1]{arconesgine1992} implies $\tau=O^\star(n^{-3/2})$, proving the claim.

\subsection{Proof of  Theorem \ref{thm:bootstrapclt}}
 From the proof of Proposition  \ref{prop:naivebootfails}, we already know that $n U_n^\star h_{\tns,n}=n U_n^\star h_{\thetanull}+o^\star(1)$. Thus, it remains to investigate the remaining term
 \begin{align*}
    &U_n^\star  \nabla_\theta h_{\tns}- U_n \nabla_\theta h_{\tn} \\
    &= U_n^\star  \nabla_\theta h_{\too}- U_n \nabla_\theta h_{\too} +U_n^\star \nabla^2_\theta h_{\too}(\tns-\too)\\
    &-U_n \nabla^2_\theta h_{\too}(\tn-\too)+R_n(\tns,\tn)\\
      &= U_n^\star  \nabla_\theta h_{\too}- U_n \nabla_\theta h_{\too} +  U_n^\star \nabla^2_\theta h_{\too}   (\tns-\tn)\\
      &+  \lc U_n^\star \nabla^2_\theta h_{\too}-U_n \nabla^2_\theta h_{\too} \rc (\tn-\too) +R_n(\tns,\tn).
 \end{align*}
 Note that each term in $R_n(\tns,\tn)$ is of the form
 $$ (\tns-\too)_i (\tns-\too)_j \lc U_n^\star \frac{\partial^3}{\partial \theta_i\partial \theta_j \partial\theta_k} h_{\Tilde{\theta}_1}\rc $$
  or
  $$-(\tn-\too)_i (\tn-\too)_j U_n \frac{\partial^3}{\partial \theta_i\partial \theta_j \partial\theta_k} h_{\Tilde{\theta}_2},$$
 where $\Tilde{\theta}_1$ is on the line segment joining $\theta_0$ and $\tns$, and $\Tilde{\theta}_2$ is on the line segment joining $\theta_0$ and $\tn$. Therefore, this term is of order $O^\star(n^{-1})$, as all moment conditions required in \cite[Theorem 4.1]{arconesgine1992}  are satisfied for $\sup_{\theta\in\Theta}\nabla^3_\theta h_\theta$. 
 By similar arguments, $\lc U_n^\star \nabla^2_\theta h_{\too}-U_n \nabla^2_\theta h_{\too} \rc (\tn-\too)=O^\star(n^{-1})$.
 This implies that
 \begin{align*}
    &(\tns-\tn) ^T \lc U_n^\star  \nabla_\theta h_{\tns}- U_n  \nabla_\theta h_{\tn} \rc \\
    &=(\tns-\tn)^T\lc U_n^\star  \nabla_\theta h_{\too}- U_n \nabla_\theta h_{\too}  \rc\\
    &+(\tns-\tn)^T  U_n^\star \nabla^2_\theta h_{\too}(\tns-\tn) + O^\star(n^{-3/2}).
 \end{align*}
 Now, \cite[Theorem 4.1]{arconesgine1992} yields $ U_n^\star  \nabla^2_\theta h_{\too}\overset{\star}{\to}\EE_{X,X^\prime\sim Q}\lk  \nabla^2_\theta h_{\too}(X,X^\prime) \rk$ and we have derived in Lemma \ref{lem:jointconvbootstrapUstat} that $\sqrt{n}\lc U_n^\star  \nabla_\theta h_{\too}- U_n \nabla_\theta h_{\too}  \rc\to Z_2$. Therefore, by the joint convergence, Assumption \ref{ass:joinconvboot} and Slutsky's Lemma, we get 
 \begin{align*}
   &n U_n^\star h_{\tns,n}+ n(\tns-\tn)^T \lc U_n^\star  \nabla_\theta h_{\tns}- U_n  \nabla_\theta h_{\tn} \rc \\
   &=  n U_n^\star h_{\thetanull}+ (\tns-\tn)^T\lc U_n^\star  \nabla_\theta h_{\too}- U_n \nabla_\theta h_{\too}  \rc\\
   &+(\tns-\tn)^T \lc U_n^\star \nabla^2_\theta h_{\too} \rc (\tns-\tn) +o^\star(1)\\
   &\overset{\star}{\to} Z_1 +Z_2^T Z_3 +Z_3^T \EE_{X,X^\prime\sim Q}\lk  \nabla^2_\theta h_{\too}(X,X^\prime) \rk Z_3, 
 \end{align*} 
 which proves the claim.

 \subsection{Proof of Corollary \ref{cor:bootstrapCLTKSD}}
     Under $H_0^C$, it immediately follows from Theorem \ref{thm:bootstrapclt} that $n U_n^\star h_{\tns,n}+ n(\tns-\tn) \lc U_n^\star  \nabla_\theta h_{\tns}- U_n  \nabla_\theta h_{\tn} \rc \overset{\star}{\to}  Z$, noting that $\EE_{X,X^\prime\sim Q}\lk  \nabla^2_\theta h_{\too}(X,X^\prime) \rk=H^\star$. Under $H_1^C$, Remark \ref{rem:bootstrapunderalt} implies that $n\widetilde{\ksd}^2_q(\tns)\overset{\star}{\to} Y:=  Z_4 +Z_2^T Z_3+Z_3\e\lk \nabla^2_\theta h_{\theta_0}(X,X')\rk Z_3$, where $nU^\star_n h_{\theta_0,n}\overset{\star}{\to} Z_4$ with $Z_4\sim \lim_{n\to\infty} U_n \bar{h}_{\theta_0}$, where $\bar{h}_{\theta_0}$ is the degenerate $U$-statistic kernel obtained from centering $h_{\theta_0}$ as in (\ref{eqndegcore}). Thus, $Y=O^\star(1)$.

\section{Inconsistency of the naive wild bootstrap for $U$-statistics with estimated parameters}
\label{app:inconswildboot}
We implicitly assume in this section that all moment conditions for the convergence of the wild bootstrap are satisfied, which allows us to focus on the relevant issue of inconsistency of the wild bootstrap for degenerate $U$-statistics under parameter estimation. In \cite{Key_2023} the authors propose to bootstrap the $V$-statistic $n\hksd_q^2(p_\thetanhat)+n^{-1}\sum_{i\in[n]} h_{\tn}(X_i,X_i)$ via
$$ n^{-1}\sum_{i\neq j\in[n] }W_i W_j h_\thetanhat(X_i,X_j)+n^{-1}\sum_{i\in[n] }W_i W_i h_\thetanhat(X_i,X_i),$$
where $(W_i)_{i\in\NN}$ are i.i.d.\ Rademacher random variables independent of $(X_i)_{i\in\NN}$. It is easy to see that by the law of large numbers $n^{-1}\sum_{i\in[n] }W_i W_i h_\thetanhat(X_i,X_i)=n^{-1}\sum_{i\in[n] } h_\thetanhat(X_i,X_i)\to \e\lk h_{\theta_0}(X,X)\rk $. Moreover, $g_\thetanhat\lc (W,X),(W',X')\rc:=W W' h_\thetanhat(X,X')$ is a symmetric $U$-statistic core for the sample $\lc(W_i,X_i)\rc_{i\in\N}$. Additionally, for every symmetric function $f$ of two arguments, we have that $g_f((W,X),(W',X')):=W W'f(X,X')$ is a symmetric degenerate $U$-statistic core for the sample $\lc(W_i,X_i)\rc_{i\in\N}$, as
$$ \e_{W_1,X_1}\lk W_1 W_2f(X_1,X_2)\rk=0 $$
since $W_2$ is independent of $X_2$. Thus, we apply a first-order Taylor expansion to obtain
\begin{align*}
    &n^{-1}\sum_{i\neq j\in[n] }g_\thetanhat\lc (W,X),(W',X')\rc=n^{-1}\sum_{i\neq j\in[n] }g_\thetanull\lc (W,X),(W',X')\rc \\
    &+(\thetanhat-\thetanull)^T n^{-1}\sum_{i\neq j\in[n] }\nabla_\theta g_\thetanull\lc (W,X),(W',X')\rc      +o^\star(1).
\end{align*}
Now, by the previous arguments, we have
$$n^{-1}\sum_{i\neq j\in[n] }\nabla_\theta g_\thetanull\lc (W,X),(W',X')\rc=O_\PP(1)$$
 and
 $$ n^{-1}\sum_{i\neq j\in[n] }\nabla^2_\theta g_\thetanull\lc (W,X),(W',X')\rc=O_\PP(1).$$
Applying that $Z_n=O_\PP(1)\Rightarrow Z_n= O^\star(1)$, which can derived similarly as  \cite[Lemma 3]{cheng_huang}, we obtain that
\begin{align*}
    &n^{-1}\sum_{i\neq j\in[n] }g_\thetanhat\lc (W,X),(W',X')\rc\\
    &=n^{-1}\sum_{i\neq j\in[n] }g_\thetanull\lc (W,X),(W',X')\rc +o^\star(1)\overset{\star}{\to} Z_1, 
\end{align*} 
where $Z_1\sim\lim_{n\to\infty} n\hksd_q^2(p_{\thetanull})$ by the standard wild bootstrap for degenerate $U$-statistics. Therefore, the wild bootstrap procedure proposed by \cite{Key_2023} does not yield an asymptotically correct confidence interval as its limiting distribution is given by $Z_1+\e\lk h_{\theta_0}(X,X)\rk$, which disregards the influence of parameter estimation.

We remark that the calculations above in no means are dependent on the framework of the KSD. All claims hold for general parameter-dependent $U$-statistics, which shows that the naive wild bootstrap for degenerate $U$-statistics with estimated parameters does generally not provide the correct limiting law.

\section{Additional plots for the simulation study}
\label{app:addplots}
\begin{figure}[h]
    \centering
    \includegraphics[width=0.5\linewidth]{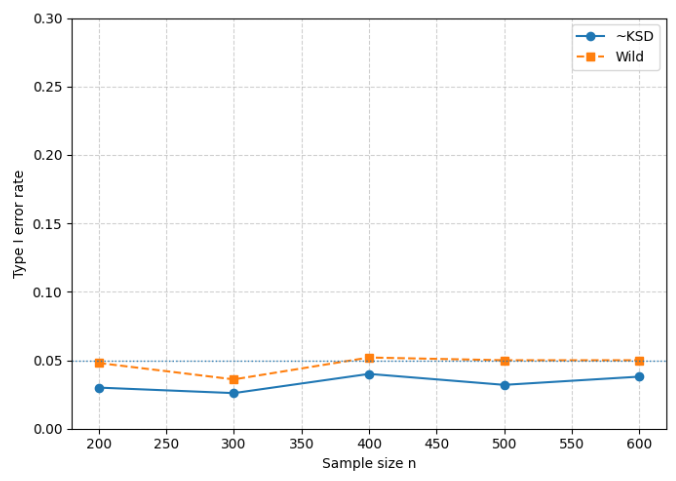}
    \caption{Simulation under the null hypothesis for dimension $d=2$. }
    \label{fig:nullhypapp}
\end{figure}
\begin{figure}[h]
    \centering
    \includegraphics[width=0.65\linewidth]{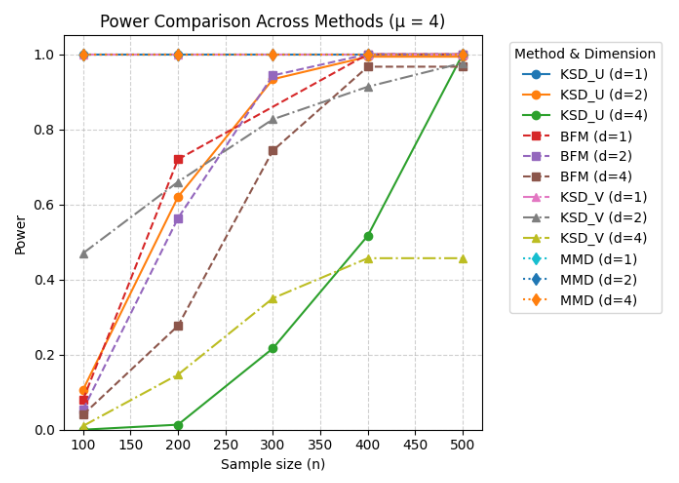}
        \caption{Simulation under the alternative for dimensions  $d\in\{1,2,4\}$ and $\mu=4$.}
        \label{fig:alternapp}
\end{figure}

\end{document}